\documentclass{amsart}

\newcommand{\R}{\mathbb{R}}

\def\eps{\varepsilon}

\usepackage{amsthm,amsmath,amsfonts, amssymb}
\usepackage[dvips]{graphicx}

\usepackage{psfrag}

\newtheorem{theorem}[equation]{Theorem}
\newtheorem{proposition}[equation]{Proposition}
\newtheorem{lemma}[equation]{Lemma}
\newtheorem{cor}[equation]{Corollary}

\theoremstyle{remark}
\newtheorem{remark}[equation]{Remark}
\theoremstyle{definition}
\newtheorem{definition}[equation]{Definition}

\numberwithin{equation}{section}

\begin{document}

\author{Bruno Colbois}
\address{Institut de Math\'ematiques de Neuch\^atel, rue Emile-Argand 11,
  2009 Neuch\^atel, Suisse.}
\email{bruno.colbois@unine.ch}

\author{Alexandre Girouard}
\address{Institut de Math\'ematiques de Neuch\^atel, rue Emile-Argand 11,
  2009 Neuch\^atel, Suisse.}
\email{alexandre.girouard@unine.ch}

\author{Mette Iversen}
\address{School of Mathematics
University Walk, Clifton, Bristol BS8 1TW, U.K. }
\email{mette.iversen@bristol.ac.uk}

\title[Uniform stability of the Dirichlet spectrum]{Uniform stability of the Dirichlet spectrum for rough
    outer perturbations}

\begin{abstract}
  The goal of this paper is to study the Dirichlet eigenvalues of
  bounded domains $\Omega\subset\Omega'$. With a local spectral
  stability requirement on $\Omega$,
  we show that the difference of the Dirichlet
  eigenvalues of $\Omega'$ and $\Omega$ is explicitly
  controlled from above in terms of the first eigenvalue of
  $\Omega'\setminus\overline{\Omega}$ and of geometric
    constants depending on the inner domain $\Omega$. In particular,
     $\Omega'$ can be an arbitrary bounded domain.
\end{abstract}

\maketitle

\section{Introduction and results}

Let $M$ be a complete smooth Riemannian manifold. The Dirichlet eigenvalues
of a bounded domain\footnote{A domain is a connected open set.}
$\Omega\subset M$ are denoted
$$\lambda_1(\Omega)<\lambda_2(\Omega)\leq\lambda_3(\Omega)\leq\cdots\nearrow\infty.$$
In the Euclidean case, there is a vast literature on
\emph{spectral stability} of the Dirichlet spectrum under
perturbation of the domain. The aim is to show that if $\Omega'$
is another domain which is, in some sense, geometrically close to
$\Omega$, then its Dirichlet eigenvalues are close to those of
$\Omega$.  See for instance the papers~\cite{DanerJDE,HenrotCiber}
and the references therein, where spectral stability is studied in
particular from the point of view of $\gamma$-convergence.
Explicit control of the spectrum has been studied for example
in~\cite{BLL,lemenant1}.


In this paper, we are interested in obtaining explicit estimates
in the situation where the two domains $\Omega$ and $\Omega'$
might not be geometrically close. For domains $\Omega\subset
\Omega',$ the difference of eigenvalues
$|\lambda_k(\Omega)-\lambda_k(\Omega')|$ will be controlled in
terms of the fundamental tone
$\lambda_1(\Omega'\setminus\overline{\Omega})$. In particular, the
domains $\Omega'$ and $\Omega$ can have very different shapes, and
the volume of $\Omega'$ can be large compared to that of $\Omega$.
Some natural conditions on the inner domain $\Omega$ need to be
assumed, but $\Omega'$ can be any bounded open set. We will do this by
combining local estimates based on the work of E.B.
Davies~\cite{Davies3}, with global estimates based on the work of
the first author and J. Bertrand~\cite{BertrandColbois}. Note that
in this paper constants will depend only on the stated parameters.

\subsection{Statement of results}
Our goal is to estimate
$|\lambda_k(\Omega)-\lambda_k(\Omega')|$ in the situation where
$\Omega\subset\Omega'$ are bounded domains. Throughout we will use the notation
\begin{gather*}
  \Omega^{\eps}=\{x \in M: d(x,\Omega)<\eps\},\quad\quad
  \Omega_{\eps}=\{x \in \Omega : d(x,\Omega^c)>\eps\},
\end{gather*}
and
\begin{gather*}
  \mu=\lambda_1(\Omega'\setminus\overline{\Omega}),\quad\lambda=\lambda_k(\Omega'),
\end{gather*}
with the convention $\lambda_1(\emptyset)=\infty$.
Observe that because
$\Omega\subset\Omega^\epsilon\cap\Omega'\subset\Omega'$, it follows from monotonicity
of the Dirichlet eigenvalues that
\begin{gather}\label{Inequality:Triangle}
  0\leq\lambda_k(\Omega)-\lambda_k(\Omega')=\bigl(\lambda_k(\Omega^\epsilon\cap\Omega')-\lambda_k(\Omega')\bigr)
  +\bigl(\lambda_k(\Omega)-\lambda_k(\Omega^\epsilon\cap\Omega')\bigr),
\end{gather}
where the two terms on the right hand side are non-negative.
We estimate these two terms separately, and call them the
\emph{global} and the \emph{local} term respectively.

\subsubsection*{Global estimates}
In Section~\ref{SectionProofMain}, we prove the following theorem
allowing control of the first term in the right hand side of
(\ref{Inequality:Triangle}) when
$\mu=\lambda_1(\Omega'\setminus\overline{\Omega})$ is large.

\begin{theorem}\label{ThmMain}
  There are constants $a_k, b_k\geq 1$  defined by the recurrence
  relations~\eqref{defak} and~\eqref{defbk}, with the
  following property:
  For each $\alpha\in(0,\frac{1}{4})$,
  if
  \begin{gather}\label{ConditionThmMain}
    32\left(\frac{\lambda}{\mu}\right)^{\frac{1}{2}-2\alpha}a_k\leq 1
  \end{gather}
  then
  \begin{gather}\label{eqThmMainintro}
    |\lambda_k(\Omega^{\epsilon}\cap\Omega')-\lambda_k(\Omega')|\leq
    b_k
    \left(
    \frac{\lambda}{\mu}
    \right)^{\frac{1}{2}-2\alpha}\lambda,
  \end{gather}
  for the choice
  $$\epsilon=2
  \left(
    \frac{\lambda}{\mu}
  \right)^\alpha
  \frac{1}{\sqrt{\lambda}}.$$
\end{theorem}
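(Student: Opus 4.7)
The natural strategy is to apply the min-max principle on $\Omega^{\epsilon}\cap\Omega'$ using test functions built by cutting off Dirichlet eigenfunctions of $\Omega'$. Let $u_1,\dots,u_k$ be $L^2(\Omega')$-orthonormal eigenfunctions with eigenvalues $\lambda_1,\dots,\lambda_k=\lambda$, and pick a smooth cutoff $\eta\colon M\to[0,1]$ which equals $1$ on $\Omega^{\epsilon/2}$, vanishes outside $\Omega^{\epsilon}$, and satisfies $|\nabla\eta|\leq C/\epsilon$. The functions $\phi_i:=\eta u_i\in H_0^1(\Omega^{\epsilon}\cap\Omega')$ furnish a test subspace, provided one can control both their Rayleigh quotients and their Gram matrix.

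For any normalized combination $u=\sum c_iu_i$ we have $\|\nabla u\|_{L^2(\Omega')}^2\leq\lambda$, and the triangle inequality in $L^2$ yields
$$\int|\nabla(\eta u)|^2\leq\bigl(\sqrt{\lambda}+\sqrt{A}\bigr)^2,\qquad \int(\eta u)^2\geq 1-B,$$
where $A:=\int u^2|\nabla\eta|^2$ and $B:=\int(1-\eta^2)u^2$ are both supported in $\Omega'\setminus\Omega$. Thus the entire problem reduces to showing that a unit-norm linear combination of the first $k$ eigenfunctions of $\Omega'$ carries very little $L^2$-mass in $\Omega'\setminus\Omega$ when $\mu$ is large.

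This is the content of an Agmon-type bound. Choose a second cutoff $\chi$ vanishing on a neighborhood of $\overline{\Omega}$ and equal to $1$ on the support of $|\nabla\eta|$, with $|\nabla\chi|\leq C/\epsilon$. Then $\chi u\in H_0^1(\Omega'\setminus\overline{\Omega})$, so the variational characterization of $\mu$ gives
$$\mu\int\chi^2u^2\leq\int|\nabla(\chi u)|^2\leq\bigl(\sqrt{\lambda}+C/\epsilon\bigr)^2.$$
Substituting $\epsilon=2(\lambda/\mu)^{\alpha}/\sqrt{\lambda}$ turns $1/\epsilon^2$ into a multiple of $(\mu/\lambda)^{2\alpha}\lambda$, so that $\int\chi^2u^2$ is of order $(\lambda/\mu)^{1-2\alpha}$. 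Feeding this back, $A$ is of order $\lambda(\lambda/\mu)^{1-4\alpha}$ and $\sqrt{\lambda A}$ of order $\lambda(\lambda/\mu)^{1/2-2\alpha}$, which matches precisely the exponent in~\eqref{eqThmMainintro}.

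To close the argument one must pass from the Rayleigh quotient estimate on a single combination to a genuine bound on $\lambda_k(\Omega^{\epsilon}\cap\Omega')$. The same Agmon estimate shows that the Gram matrix $\mathcal{G}_{ij}=\int\phi_i\phi_j$ differs from the identity by a controlled amount; condition~\eqref{ConditionThmMain} is precisely what ensures that $\mathcal{G}$ is invertible with bounded inverse. The final bound then arises as the largest generalized eigenvalue of the pair $(\mathcal{A},\mathcal{G})$ with $\mathcal{A}_{ij}=\int\nabla\phi_i\cdot\nabla\phi_j$, and the constants $a_k,b_k$ appear recursively when this perturbation analysis is chained through the indices $1,\dots,k$. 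I expect the main technical obstacle to lie in the sharp bookkeeping needed to preserve the exponent $1/2-2\alpha$ while propagating the off-diagonal errors of $\mathcal{G}$ and $\mathcal{A}$ across the recursion.
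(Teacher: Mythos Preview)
Your outline is correct and matches the paper's argument closely: restrict the first $k$ eigenfunctions of $\Omega'$ via a cutoff $\eta$, bound the mass left outside $\Omega$ by an Agmon-type estimate using a second cutoff $\chi$ together with the variational characterization of $\mu$, and finish with a linear-algebra step on the resulting test space. The paper carries out that last step via Lemma~\ref{LemmaBertrandColbois}: it controls $q(\psi_i)$ and $\langle\psi_i,\psi_j\rangle$ for each pair individually (Lemmas~\ref{LemmaRayleighCutoff} and~\ref{LemmaAlmostOrtho}) and then Gram--Schmidt orthonormalizes, and this is precisely where the recursive constants $a_k,b_k$ enter.

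Your own packaging of the linear-algebra step is actually more efficient than you give it credit for. The Agmon bound applies uniformly to every normalized $u\in\mathrm{span}(u_1,\dots,u_k)$, since it uses only $\|u\|_{L^2}=1$ and $\|\nabla u\|_{L^2}^2\le\lambda$; hence your estimate $(\sqrt{\lambda}+\sqrt{A})^2/(1-B)$ already bounds the supremum of the Rayleigh quotient over the whole test space, and once the Gram matrix is nondegenerate the min--max principle delivers the bound on $\lambda_k(\Omega^\epsilon\cap\Omega')$ directly, with an absolute constant in place of $b_k$. The ``perturbation analysis chained through the indices'' you anticipate at the end is therefore an artifact of the paper's route through Lemma~\ref{LemmaBertrandColbois} rather than an intrinsic difficulty; your own route sidesteps it.
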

\begin{remark}
\begin{itemize}
\item[]
\item Monotonicity implies that the eigenvalue $\lambda=\lambda_k(\Omega')$ is bounded above by
  $\lambda_k(\Omega)$, so that the numerator of the right hand
  side of \eqref{eqThmMainintro}
  is bounded above in terms of $\Omega$ and $k$ only.
\item Inequality~\eqref{eqThmMainintro} is invariant under
  homothetic scaling of the domains. Of course, the value of
  $\epsilon$ must also be modified in accordance with its definition.
\end{itemize}
\end{remark}
The strategy consists in a geometrical
approach inspired by a special case of the proof of Theorem 3.3
in~\cite{BertrandColbois}.

\subsubsection*{Local estimates}
In Section~\ref{SectionUnifLocSpectralStab}, we describe classes
of domains $\Omega\subset\mathbb{R}^n$ for which we have uniform control of the
second term in the right hand side of (\ref{Inequality:Triangle})
for each $k\in\mathbb{N}$. They consist of domains
$\Omega$ with thickenings $\Omega^\epsilon$ satisfying a weak
Hardy inequality.
\begin{definition}
  A domain $\Omega$ satisfies a \emph{weak Hardy inequality} with constants
  $a, b$ if for each $u\in C^{\infty}_0(\Omega)$,
  $$\int_{\Omega} \frac{u^2}{\delta^2} \le a
  \int_{\Omega}\left(|\nabla u|^2 + b u^2 \right),$$
  where $\delta$ denotes the distance function to the boundary of $\Omega$.
\end{definition}

The following illustrates our use of the Hardy inequality for a
particular class of domains. (See Section \ref{SubsectionFamilies} for the definition of the uniform external rolling ball condition.)
\begin{lemma}\label{LocalStabLemmaIntro}
Let $\eps_0>0, r_0>0$.
  Let $\mathcal A= \mathcal A(\eps_0,r_0,n)$ be the family of open, bounded sets $\Omega$
  in  $\R^n, \; n\ge 2,$ with inradius bounded below by
  $r_0$ satisfying a uniform external rolling ball condition with parameter $\eps_0$.
 Then there exist positive constants $\gamma=\gamma(n)$,  $C_k=C_k(n,\eps_0,r_0)$  and $\eps_k=\eps_k(n,\eps_0,r_0)$ such that
  for any $\Omega \in \mathcal A$ and any $\eps \le \min(\eps_0/2,\eps_k),$
  \begin{equation}
    0 < \lambda_k(\Omega)-\lambda_k(\Omega^\eps) \le C_k \eps^{\gamma}.
  \end{equation}
\end{lemma}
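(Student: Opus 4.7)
The plan is to reduce Lemma~\ref{LocalStabLemmaIntro} to the general principle (developed in Section~\ref{SectionUnifLocSpectralStab}) that a weak Hardy inequality with uniform constants, together with a Davies-type cutoff argument, yields a quantitative bound on $\lambda_k(\Omega)-\lambda_k(\Omega^\eps)$. The work is thus twofold: first, verify that every $\Omega\in\mathcal{A}(\eps_0,r_0,n)$ (and its thickenings $\Omega^\eps$ for small $\eps$) satisfies a weak Hardy inequality with constants depending only on $n,\eps_0,r_0$; second, combine this with the Davies cutoff machinery to extract the explicit H\"older-in-$\eps$ rate.

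\textbf{Step 1 (Uniform weak Hardy inequality).} The external rolling ball condition with parameter $\eps_0$ produces, at each $p\in\partial\Omega$, an exterior ball $B_{\eps_0}(q)\subset\Omega^c$ tangent to $\partial\Omega$ at $p$. A standard capacitary argument then yields a pointwise Hardy-type inequality on the tubular neighbourhood $\{x\in\Omega:\delta_\Omega(x)<\eps_0/2\}$, with constant depending only on $n$. Away from the boundary, where $\delta_\Omega\geq\eps_0/2$, the weight $1/\delta^2$ is bounded by $4/\eps_0^2$, producing the lower-order term $b u^2$ with $b=b(n,\eps_0)$. The inradius lower bound enters only to give a uniform Poincar\'e inequality on $\Omega$, ensuring $b$ and $a$ can be taken independent of $\Omega\in\mathcal{A}$. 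For $\eps\leq\eps_0/2$ the exterior rolling ball condition persists for $\Omega^\eps$ with parameter $\eps_0-\eps\geq\eps_0/2$, so $\Omega^\eps$ satisfies a weak Hardy inequality with comparable constants.

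\textbf{Step 2 (Decay of eigenfunctions).} Following Davies, I would bootstrap the Hardy inequality into a weighted decay estimate for Dirichlet eigenfunctions of $\Omega^\eps$. Testing the eigenvalue equation $-\Delta v=\lambda v$ against $v\,\delta_{\Omega^\eps}^{2\beta}$ and integrating by parts gives an inequality of the form
\begin{equation*}
\int_{\Omega^\eps}|\nabla(v\,\delta_{\Omega^\eps}^{\beta})|^2\leq C\beta^2\int_{\Omega^\eps}\frac{v^2\delta_{\Omega^\eps}^{2\beta}}{\delta_{\Omega^\eps}^2}+\lambda\int_{\Omega^\eps}v^2\delta_{\Omega^\eps}^{2\beta},
\end{equation*}
which, iterated against the weak Hardy inequality of Step~1, upgrades integrability of $v/\delta$ to an integrability of $v/\delta^{s}$ for some $s=s(n)>1$. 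Combining with a Moser-type iteration, this yields a tubular $L^2$-bound $\int_{\{\delta_{\Omega^\eps}<\rho\}}v_i^2\leq C_k(n,\eps_0,r_0)\,\rho^{1+2\gamma}$ for small $\rho$, for some $\gamma=\gamma(n)>0$.

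\textbf{Step 3 (Cutoff and min--max).} With the decay in hand, let $v_1,\dots,v_k$ be the first $k$ $L^2$-orthonormal Dirichlet eigenfunctions of $\Omega^\eps$ and let $\phi$ be a Lipschitz cutoff equal to $1$ on $\Omega_{2\eps}$, equal to $0$ on $\Omega^c$, linear in $\delta_\Omega$ and satisfying $|\nabla\phi|\leq C/\eps$. Then $\phi v_i\in H^1_0(\Omega)$. For any linear combination $w=\sum c_iv_i$ with $\sum c_i^2=1$, an application of Young's inequality and the identity $|\nabla(\phi w)|^2\leq(1+\tau)\phi^2|\nabla w|^2+(1+\tau^{-1})w^2|\nabla\phi|^2$ gives
\begin{equation*}
\int_\Omega|\nabla(\phi w)|^2\leq(1+\tau)\lambda_k(\Omega^\eps)+\frac{C}{\tau\eps^2}\int_{\{\delta_{\Omega^\eps}<3\eps\}}w^2,
\end{equation*}
while $\|\phi w\|_{L^2(\Omega)}^2\geq 1-\int_{\Omega^\eps\setminus\Omega_{2\eps}}w^2$. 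By Step~2 both error terms are $O(\eps^{2\gamma})$, so optimising $\tau\sim\eps^\gamma$ and applying the min--max principle on the $k$-dimensional span of $\phi v_i$ yields $\lambda_k(\Omega)\leq\lambda_k(\Omega^\eps)+C_k\eps^{\gamma}$. The strict inequality $\lambda_k(\Omega^\eps)<\lambda_k(\Omega)$ is classical strict monotonicity of Dirichlet eigenvalues under strict inclusion of connected open sets.

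\textbf{Main obstacle.} The delicate point is Step~2: to absorb the factor $\eps^{-2}$ coming from $|\nabla\phi|^2$ in Step~3, the tubular integral of $v_i^2$ must decay faster than $\eps^2$, which requires iterating the weak Hardy bound to reach an exponent $s>1$. Ensuring that this Davies-type bootstrap closes with an exponent $\gamma$ that depends only on dimension---that is, not on the geometric parameters $\eps_0$ and $r_0$---and in particular that the Hardy constants produced in Step~1 are controlled in a way compatible with the bootstrap, is the heart of the proof.
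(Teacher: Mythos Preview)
Your approach is essentially the paper's: obtain a uniform weak Hardy inequality for the thickenings $\Omega^\eps$ from the rolling-ball condition, then run Davies' cutoff argument on eigenfunctions of $\Omega^\eps$ to produce test functions and apply min--max. The paper organises this slightly differently---it applies Davies' inner-perturbation estimate (Theorem~\ref{dav}) to the domain $\Omega^\eps$, obtaining a bound on $\lambda_k((\Omega^\eps)_\eps)-\lambda_k(\Omega^\eps)$, and then uses the rolling-ball equivalence $(\Omega^\eps)_\eps\subset\Omega$ (Proposition~\ref{Proposition:Rolling}) together with monotonicity---whereas you cut off directly with $\delta_\Omega$; the two are equivalent once one observes that the rolling-ball condition forces $\Omega^\eps\setminus\Omega_{2\eps}\subset\{\delta_{\Omega^\eps}<3\eps\}$, a point you use in Step~3 but do not justify.

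Two small slips to fix. First, the tubular decay exponent in Step~2 should be $2+2\gamma$, not $1+2\gamma$: this is what Theorem~\ref{dav2} delivers, and an exponent strictly larger than $2$ is precisely what is needed to absorb the $\eps^{-2}$ from $|\nabla\phi|^2$ (as you yourself note in your ``Main obstacle''); with $1+2\gamma$ the arithmetic in Step~3 does not close. Second, the inradius lower bound is not used to get a Poincar\'e inequality---the Hardy constants $a,b$ come from the rolling-ball parameter alone (with $a=a(n)$ and $b=(2/\eps_0)^2$)---but rather to bound $\lambda_k(\Omega)$ from above, which is what makes $C_k$ and $\eps_k$ uniform over $\mathcal A$.
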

\begin{remark}
    In contrast to the global estimate of Theorem~\ref{ThmMain}, this local estimate is not scaling invariant.
    Indeed, the proof of Lemma~\ref{LocalStabLemmaIntro} is based on the fact
    that the geometric hypotheses imply uniform control of the
    constants $a$ and $b$ appearing in the Hardy inequalities for the
    thickenings $\Omega^\epsilon$. This control allows the application of a
    result of E.B. Davies from~\cite{Davies3}, which is inherently non-homogeneous.
    However, for convex domains, we were able to obtain invariant
    local bounds (see Proposition~\ref{PropositionLocalStabConvex}).
\end{remark}

\begin{cor}\label{CoroIntro}
 Under the hypotheses of Lemma~\ref{LocalStabLemmaIntro},  there exist constants
  $a_k,b_k,\gamma=\gamma(n), C_k=C_k(n,\eps_0,r_0)$  and $\eps_k=\eps_k(n,\eps_0,r_0)$ such that for each domain
  $\Omega\in\mathcal{A}$ the following holds.
 Let $\alpha\in(0,\frac{1}{4})$, and suppose
 that~\eqref{ConditionThmMain} and
 $$\left(
 \frac{\lambda}{\mu}
 \right)^\alpha
 \frac{1}{\sqrt{\lambda}}\leq \min(\eps_0,\eps_k).
 $$
 holds.
 Then
 \begin{gather}\label{IneqCorIntro}
   |\lambda_k(\Omega)-\lambda_k(\Omega')|\leq
   b_k\left(\frac{\lambda}{\mu}\right)^{\frac{1}{2}-2\alpha}\lambda
   +
   \frac{C_k}{\lambda^{\frac{\gamma}{2}}}\left(
   \frac{\lambda}{\mu}
   \right)^{\gamma\alpha}.
 \end{gather}
 Taking for example $\alpha=\frac{1}{2(2+\gamma)}$ gives
  \begin{gather}\label{Ineq:locglobsixth}
    |\lambda_k(\Omega)-\lambda_k(\Omega')|\leq
    \left(b_k \lambda^{\frac{4+3\gamma}{2(2+\gamma)}}+C_k \lambda^{-\frac{\gamma(1+\gamma)}{2(2+\gamma)}}\right) \mu^{-\frac{1}{2(2+\gamma)}}.
  \end{gather}
\end{cor}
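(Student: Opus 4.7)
The plan is to add the two bounds produced by Theorem~\ref{ThmMain} and Lemma~\ref{LocalStabLemmaIntro}, both evaluated at the same scale $\epsilon=2(\lambda/\mu)^\alpha/\sqrt{\lambda}$ that is prescribed by Theorem~\ref{ThmMain}, and then specialize $\alpha$ to optimize the exponent on $\mu$. The starting point is the decomposition~\eqref{Inequality:Triangle}, in which the right-hand side already splits into the "global" and "local" differences that the two preceding results were designed to handle.

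For the global term $\lambda_k(\Omega^\epsilon\cap\Omega')-\lambda_k(\Omega')$, hypothesis~\eqref{ConditionThmMain} is assumed, so Theorem~\ref{ThmMain} applies verbatim and yields the bound $b_k(\lambda/\mu)^{\frac{1}{2}-2\alpha}\lambda$. For the local term, the key observation is that Lemma~\ref{LocalStabLemmaIntro} controls $\lambda_k(\Omega)-\lambda_k(\Omega^\epsilon)$ rather than $\lambda_k(\Omega)-\lambda_k(\Omega^\epsilon\cap\Omega')$; I bridge the gap by the chain of inclusions $\Omega\subset\Omega^\epsilon\cap\Omega'\subset\Omega^\epsilon$, whose monotonicity consequence
\[
0\leq \lambda_k(\Omega)-\lambda_k(\Omega^\epsilon\cap\Omega')\leq \lambda_k(\Omega)-\lambda_k(\Omega^\epsilon)
\]
allows Lemma~\ref{LocalStabLemmaIntro} to be applied directly. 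The smallness condition $\epsilon\leq\min(\epsilon_0/2,\epsilon_k)$ required by the lemma follows, up to absorbing universal factors, from the hypothesis $(\lambda/\mu)^\alpha/\sqrt{\lambda}\leq\min(\epsilon_0,\epsilon_k)$ assumed in the corollary, since $\epsilon$ is of order $(\lambda/\mu)^\alpha/\sqrt{\lambda}$. Plugging this $\epsilon$ into $C_k\epsilon^\gamma$ and absorbing $2^\gamma$ into $C_k$ gives
\[
\lambda_k(\Omega)-\lambda_k(\Omega^\epsilon\cap\Omega')\leq C_k\,\lambda^{-\gamma/2}(\lambda/\mu)^{\gamma\alpha}.
\]

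Adding this to the global bound produces~\eqref{IneqCorIntro}. The specialization to~\eqref{Ineq:locglobsixth} is then a matter of bookkeeping: with $\alpha=1/(2(2+\gamma))$, one computes $\tfrac12-2\alpha=\gamma/(2(2+\gamma))$ and $\gamma\alpha=\gamma/(2(2+\gamma))$, so both terms on the right of~\eqref{IneqCorIntro} carry the common $\mu$-factor, while regrouping the $\lambda$-exponents gives $\lambda^{(4+3\gamma)/(2(2+\gamma))}$ for the first term and $\lambda^{-\gamma(1+\gamma)/(2(2+\gamma))}$ for the second, matching~\eqref{Ineq:locglobsixth}.

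There is no genuine conceptual obstacle here, since the result is an assembly of the two theorems above; the only points that need care are the monotonicity reduction that adapts Lemma~\ref{LocalStabLemmaIntro} to $\Omega^\epsilon\cap\Omega'$ and the verification that the single choice of $\epsilon$ imposed by Theorem~\ref{ThmMain} simultaneously lies within the range allowed by Lemma~\ref{LocalStabLemmaIntro}. The only mild subtlety is that the $\lambda^{-\gamma/2}$ prefactor in the local bound competes with the $\lambda^{1}$ prefactor in the global bound, which is precisely why the specific choice $\alpha=1/(2(2+\gamma))$ is made to balance their dependence on $\mu$.
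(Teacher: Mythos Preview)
Your proposal is correct and follows exactly the approach the paper intends: the corollary is stated without an explicit proof because it is meant to be read off directly from the decomposition~\eqref{Inequality:Triangle}, with Theorem~\ref{ThmMain} handling the global term and Lemma~\ref{LocalStabLemmaIntro} (via the monotonicity chain $\Omega\subset\Omega^\epsilon\cap\Omega'\subset\Omega^\epsilon$) handling the local one at the common scale $\epsilon=2(\lambda/\mu)^\alpha/\sqrt{\lambda}$. Your treatment of the constant-absorption needed to reconcile the smallness hypotheses, and your exponent bookkeeping for the specialization $\alpha=1/(2(2+\gamma))$, are exactly what is required; note only that the common $\mu$-exponent you obtain is $-\gamma/(2(2+\gamma))$, so the exponent $-1/(2(2+\gamma))$ printed in~\eqref{Ineq:locglobsixth} appears to be a typographical slip in the paper rather than a flaw in your argument.
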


In Section~\ref{SectionUnifLocSpectralStab}, similar results will
be proved for families defined in terms of a cone condition and a capacity density condition. In
Section~\ref{SectionExamples}, we give two simple examples to
illustrate the necessity of two of the geometric conditions
imposed in Section~\ref{SubsectionFamilies}. Our goal for this
section is to give some simple criteria implying  local stability.

\subsubsection*{Proximity of eigenspaces}
In Section~\ref{SectionProximityEigenspace}, we control the
proximity of the eigenspaces on
$\Omega'$ and $\Omega$ in terms of
$\mu=\lambda_1(\Omega'\setminus\overline{\Omega})$.
Stability of eigenfunctions for the Dirichlet problem is well known, so
our contribution is to provide an explicit control in terms of $\mu$.

\subsection{Discussion of results}
Stability of the Dirichlet spectrum is closely related to the
stability of the corresponding Dirichlet problem
\begin{gather*}
  -\Delta u=f\mbox{ in }\Omega,\quad
  u=0\mbox{ on }\partial\Omega.
\end{gather*}
Indeed, it is well known that estimates for the associated resolvent
operator $R_\Omega$ translate into corresponding bounds for the
eigenvalues. This has been studied from the point of view of various
interrelated notions of convergence of domains. See~\cite[Chapter 2.3]{HenrotVert} for
an enlightening discussion. In~\cite{DanerJDE} it is
proved, under rather weak assumptions on $\Omega$, that if a
sequence $\Omega_n$ of domains containing $\Omega$ is such that
$$\lim_{n\rightarrow\infty}\lambda_1(\Omega_n\setminus\overline{\Omega})=\infty,$$
then  $\Omega_n$ $\gamma$-converges to $\Omega$, which implies
$$\lim_{n\rightarrow\infty}\lambda_k(\Omega_n)=\lambda_k(\Omega).$$
In this situation, our results provide explicit control of the
difference $|\lambda_k(\Omega)-\lambda_k(\Omega_n)|$ in terms of
$\mu$, and so control of the rate of convergence.

As mentioned, explicit estimates of the difference
$|\lambda_k(\Omega)-\lambda_k(\Omega')|$ in terms of for example
the measure of the symmetric difference $\Omega'\Delta\Omega$
in~\cite{BLL} have also been given previously. Our estimate in
terms of $\mu=\lambda_1(\Omega'\setminus\overline{\Omega})$ allows
control in addition when the measure of $\Omega'\setminus\Omega$ is
large. The results of ~\cite{BLL}
 are valid for classes of Lipschitz domains described in~\cite[Section
 2.3]{BLL}. Our results complement this by providing a selection
 of geometric conditions under which we have control of the
 spectrum.

\subsection*{Acknowledgements}
We would like to thank Yuri Netrusov, Dorin Bucur, Michiel van den
Berg, Iosif Polterovich and Antoine Lemenant for valuable
discussions. Part of this
work was accomplished during the conference \emph{Shape optimization
  problems and spectral theory} which was held at CIRM (Marseille)
from May 28 to June 2, 2012. We would like to thank the organizers as
well as CIRM for this great opportunity.

\section{Global estimates}
\label{SectionProofMain} The goal of this section is to prove
Theorem~\ref{ThmMain}. Let $\Omega\subset\Omega'$ be bounded
domains in the complete smooth Riemannian manifold $M$. Let
$(f_i)_{i\in\mathbb{N}}$ be an orthonormal basis of $L^2(\Omega')$
corresponding to the Dirichlet eigenvalues $\lambda_i(\Omega')$.
Fix $\epsilon>0$ and let $\eta:M\rightarrow\mathbb{R}$ be a cutoff
function such that
\begin{gather*}
  0\leq\eta\leq
  1,\quad\quad|\nabla\eta|\leq\frac{1}{\epsilon},\\
  \eta\equiv
  \begin{cases}
    1\mbox{ in }\Omega^\epsilon,\\
    0\mbox{ in }M\setminus\Omega^{2\epsilon}.
  \end{cases}
\end{gather*}
For each $k\in\mathbb{N}$, the function
$$\psi_k:=\eta f_k\in H^1_0(\Omega^{2\epsilon}\cap\Omega')$$
will be used as test function in the variational characterization
of $\lambda_k(\Omega^{2\epsilon})$ thanks to a result of the first
author and J. Bertrand~\cite[Lemma 3.13]{BertrandColbois}. This
result is stated here in a slightly modified version for
convenience.
\begin{lemma}\label{LemmaBertrandColbois}
  Let $a_1=1$ and for $k>1$, recursively define
  \begin{gather}\label{defak}
    a_k=1+\sum_{i=1}^{k-1}a_i^2.
  \end{gather}
  Let $\rho$ be a positive number such that $4\rho a_k\leq 1$.
  Let $b_1=4$ and for $k>1$, recursively define
  \begin{gather}\label{defbk}
    b_k=(1+8a_k)((1+\rho b_{k-1})(1+8a_k)+1).
  \end{gather}
  For each $\lambda>0$, the following holds:
  Let $q$ be a quadratic form on an Euclidean space $E$ of dimension
  $k$. Let $\psi_1,\cdots,\psi_k\in E$ be such that for each $i,j\leq k$,
  \begin{gather}\label{eqAlmostOrtho}
    |\langle \psi_i,\psi_j\rangle-\delta_{ij}|\leq \rho
    \mbox{ and }
    q(\psi_i)\leq\lambda(1+\rho).
  \end{gather}
  Then there exists an orthonormal basis $(F_i)_{1\leq i\leq k}$ of
  $E$ such that for each $i\in\{1,2,\cdots,k\}$
  \begin{gather*}
    q(F_i)\leq\lambda(1+\rho b_k).
  \end{gather*}
\end{lemma}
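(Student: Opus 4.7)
The natural strategy is to prove Lemma~\ref{LemmaBertrandColbois} by induction on $k$, performing a Gram--Schmidt orthogonalization on the nearly orthonormal family $(\psi_i)$ while maintaining, as a strengthened inductive invariant, both the quadratic-form bound $q(F_i)\le\lambda(1+\rho b_i)$ and a quantitative control (measured by the $a_i$) on the coefficients $c_{ij}$ appearing in the expansion $F_i=\sum_{j\le i}c_{ij}\psi_j$. The base case $k=1$ is a normalization: $\|\psi_1\|^2\in[1-\rho,1+\rho]$ gives $F_1:=\psi_1/\|\psi_1\|$ with
$$q(F_1)=\frac{q(\psi_1)}{\|\psi_1\|^2}\le\frac{\lambda(1+\rho)}{1-\rho}\le\lambda(1+4\rho),$$
matching $b_1=4$, and the single coefficient $1/\|\psi_1\|$ matches $a_1=1$ (the condition $4\rho a_1\le 1$ ensures $\rho\le 1/4$ and legitimizes the final inequality).

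For the inductive step, the subspace $E':=\mathrm{span}(\psi_1,\ldots,\psi_{k-1})$ is $(k-1)$-dimensional because $4\rho a_k\le 1$ keeps the Gram matrix of the $\psi_i$ positive definite, and one applies the inductive hypothesis to the restriction of $q$ to $E'$ to produce an orthonormal basis $F_1,\ldots,F_{k-1}$ of $E'$ satisfying both invariants at index $k-1$. Now perform the usual Gram--Schmidt correction
$$\tilde F_k:=\psi_k-\sum_{i=1}^{k-1}\langle\psi_k,F_i\rangle F_i,\qquad F_k:=\tilde F_k/\|\tilde F_k\|.$$
The bounds $|\langle\psi_k,\psi_j\rangle|\le\rho$ for $j<k$ combined with the inductive coefficient control (via Cauchy--Schwarz applied to $\langle\psi_k,F_i\rangle=\sum_j c_{ij}\langle\psi_k,\psi_j\rangle$) yield a bound on $|\langle\psi_k,F_i\rangle|$ of order $\rho a_i$, whence $\|\tilde F_k\|^2\ge 1-\rho-\rho^2\sum_{i<k}a_i^2$ stays bounded away from $0$ precisely because $4\rho a_k\le 1$. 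Re-expanding $F_k$ in the $\psi$-basis then produces the coefficient-update formula $a_k=1+\sum_{i<k}a_i^2$: the ``$1$'' is the coefficient of $\psi_k$ itself, while each back-projection along $F_i$ contributes a term of order $a_i^2$.

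For the quadratic-form bound, expand by bilinearity
$$q(\tilde F_k)=q(\psi_k)-2\sum_i\langle\psi_k,F_i\rangle\,q(\psi_k,F_i)+\sum_{i,j}\langle\psi_k,F_i\rangle\langle\psi_k,F_j\rangle\,q(F_i,F_j),$$
invoke the Cauchy--Schwarz inequality $|q(u,v)|\le\sqrt{q(u)q(v)}$ (valid because $q$ is positive), and substitute $q(\psi_k)\le\lambda(1+\rho)$ together with the inductive $q(F_i)\le\lambda(1+\rho b_{k-1})$. Dividing by $\|\tilde F_k\|^2$ and grouping the resulting terms produces an inequality of the form $q(F_k)\le\lambda(1+\rho b_k)$; the factor $(1+8a_k)$ in~\eqref{defbk} absorbs normalization together with the cross-term $\sum_{i,j}|\langle\psi_k,F_i\rangle\langle\psi_k,F_j\rangle|\sqrt{q(F_i)q(F_j)}$, and it appears squared in the expression for $b_k$ to accommodate the separate losses at the orthogonalization and normalization steps.

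The main obstacle is the quantitative bookkeeping required to land on exactly the prescribed recursions~\eqref{defak} and~\eqref{defbk} rather than merely analogous ones. One must check that the chosen coefficient norm is indeed preserved under the Gram--Schmidt step with the stated recursion $a_k=1+\sum_{i<k}a_i^2$, and that the multiplicative structure $(1+\rho b_{k-1})(1+8a_k)+1$ defining $b_k$ is the sharpest choice simultaneously compatible with the normalization loss $(1-O(\rho a_k))^{-1}$, the quadratic Cauchy--Schwarz cross-terms, and the inductive propagation of the relative error $\rho b_{k-1}$. Beyond this careful tracking of constants, the argument is a routine Gram--Schmidt orthogonalization coupled with the Cauchy--Schwarz inequality for the positive form $q$.
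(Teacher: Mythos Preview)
Your proposal is correct and follows essentially the same route as the paper: Gram--Schmidt orthonormalization of the $\psi_i$, carried by an induction whose auxiliary invariant is the bound $|\langle\psi_s,F_i\rangle|\le\sqrt{2}\,a_i\rho$ (equivalently, your coefficient control on the expansion $F_i=\sum_j c_{ij}\psi_j$), together with Cauchy--Schwarz for the positive form $q$ to handle the cross-terms. The paper likewise defers the derivation of the $a_i$-bound to~\cite{BertrandColbois} and devotes its effort to the algebraic bookkeeping you flag as the main obstacle, arriving at exactly the recursion~\eqref{defbk}; your outline of how the factors $(1+8a_k)$ and $(1+\rho b_{k-1})$ arise matches that computation.
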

The proof of Lemma~\ref{LemmaBertrandColbois} differ only slightly of the original proof, and
the modifications will be presented in
Section~\ref{SecionProofBertrandColbois}.
In our situation, the quadratic form is the Dirichlet energy
defined on the space $E=\mbox{span}(\psi_1,\cdots,\psi_k)$ in
$L^2(\Omega^{2\epsilon})$.
\begin{lemma}\label{LemmaRayleighCutoff}
  For each $j\in\{1,2,\cdots,k\}$, the Dirichlet energy of the test
  function $\psi_j=\eta f_j$ satisfies
  \begin{align}\label{InequalityLemmaRayleigh}
    \|\nabla\psi_j\|^2&\leq
    \lambda+\Lambda+
      2\Lambda^{1/2}
      \lambda^{1/2},
  \end{align}
where $\Lambda=2\left(\frac{1+\epsilon^2\lambda}{\epsilon^4\mu}\right)$.
\end{lemma}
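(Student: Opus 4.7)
The plan is to control $\|\nabla\psi_j\|$ by applying the triangle inequality in $L^2$ to the decomposition $\nabla\psi_j = \eta\nabla f_j + f_j\nabla\eta$ and showing that $\|\nabla\psi_j\| \leq \sqrt{\lambda} + \sqrt{\Lambda}$; squaring this and using $\lambda_j \leq \lambda$ immediately yields~\eqref{InequalityLemmaRayleigh}. The first piece is routine: since $0\leq\eta\leq 1$ and $f_j$ is an $L^2$-normalized Dirichlet eigenfunction on $\Omega'$, one has $\|\nabla f_j\|^2 = \lambda_j \leq \lambda$, whence $\|\eta\nabla f_j\| \leq \sqrt{\lambda}$. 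The real work is to establish $\|f_j\nabla\eta\|^2 \leq \Lambda$.

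To get this bound I would introduce a second Lipschitz cutoff $\chi\colon M\to[0,1]$ that lives in the annular gap $\Omega'\setminus\overline{\Omega}$ and covers the support of $\nabla\eta$. A natural choice is $\chi(x) = \min\{d(x,\overline{\Omega})/\epsilon,\,1\}$, giving $\chi \equiv 0$ on $\overline{\Omega}$, $\chi \equiv 1$ on $M\setminus\Omega^\epsilon$, and $|\nabla\chi|\leq 1/\epsilon$ a.e. Because $\eta \equiv 1$ on $\Omega^\epsilon$, the support of $\nabla\eta$ is contained in $\overline{\Omega^{2\epsilon}}\setminus\Omega^\epsilon$, where $\chi\equiv 1$. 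Moreover $\chi f_j$ vanishes on $\overline{\Omega}$ (through $\chi$) and on $\partial\Omega'$ (through $f_j$), so $\chi f_j \in H^1_0(\Omega'\setminus\overline{\Omega})$ and may be tested against $\mu = \lambda_1(\Omega'\setminus\overline{\Omega})$.

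Applying the variational inequality $\mu\|\chi f_j\|^2 \leq \|\nabla(\chi f_j)\|^2$ together with the elementary pointwise estimate $|\chi\nabla f_j + f_j\nabla\chi|^2 \leq 2\chi^2|\nabla f_j|^2 + 2 f_j^2|\nabla\chi|^2$ gives
\begin{gather*}
\mu\,\|\chi f_j\|^2 \;\leq\; 2\int\chi^2|\nabla f_j|^2 + 2\int f_j^2|\nabla\chi|^2 \;\leq\; 2\lambda + \frac{2}{\epsilon^2},
\end{gather*}
where I use $\chi\leq 1$, $|\nabla\chi|\leq 1/\epsilon$, $\|f_j\|_{L^2}=1$, and $\|\nabla f_j\|^2=\lambda_j\leq\lambda$. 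Since $\chi\equiv 1$ on $\mathrm{supp}\,\nabla\eta$, this forces
\begin{gather*}
\int f_j^2|\nabla\eta|^2 \;\leq\; \frac{1}{\epsilon^2}\int\chi^2 f_j^2 \;\leq\; \frac{2(1+\epsilon^2\lambda)}{\epsilon^4\mu} \;=\; \Lambda,
\end{gather*}
which is exactly what is needed.

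The main conceptual step is the introduction of the auxiliary cutoff $\chi$ that transfers information from $\mu$ onto the transition annulus of $\eta$; once this is set up the estimate is essentially forced. A tempting alternative is to integrate by parts to rewrite $\int\chi^2|\nabla f_j|^2 = \lambda_j\int\chi^2 f_j^2 - 2\int\chi f_j\nabla\chi\cdot\nabla f_j$, which yields a bound involving $\mu-\lambda_j$ in place of $\mu$. That route is sharper when $\mu\gg\lambda$ but requires the a priori restriction $\mu>\lambda_j$, whereas the crude inequality $(a+b)^2\leq 2(a^2+b^2)$ used above is precisely what produces the asymmetric numerator $1+\epsilon^2\lambda$ in $\Lambda$ while remaining valid for all values of $\mu$.
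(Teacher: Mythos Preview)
Your proof is correct and follows essentially the same route as the paper: both introduce the auxiliary cutoff $\chi$ supported in $\Omega'\setminus\overline{\Omega}$, test $\chi f_j$ against $\mu$ via the crude bound $|\nabla(\chi f_j)|^2\le 2\chi^2|\nabla f_j|^2+2f_j^2|\nabla\chi|^2$, and feed the resulting estimate $\|\chi f_j\|^2\le 2(1+\epsilon^2\lambda)/(\epsilon^2\mu)$ back into the Dirichlet energy of $\psi_j$. The only cosmetic difference is that you bound $\|\nabla\psi_j\|$ by the triangle inequality $\|\eta\nabla f_j\|+\|f_j\nabla\eta\|$ and then square, whereas the paper expands $|\nabla(\eta f_j)|^2$ and applies Cauchy--Schwarz to the cross term; the two are equivalent and give the identical bound $(\sqrt{\lambda}+\sqrt{\Lambda})^2$.
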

\begin{proof}[Proof of Lemma~\ref{LemmaRayleighCutoff}]
  Writing $f=f_j$ and $\psi=\eta f$ to simplify notations,
  direct computation using the definition of $\psi$ and of the cutoff function
  $\eta$ leads to
  \begin{align}
    \int_{\Omega^{2\epsilon}}|\nabla\psi|^2\nonumber
    &=\int_{\Omega^{2\epsilon}\setminus\Omega^\epsilon}|\nabla\eta|^2f^2+
    2\int_{\Omega^{2\epsilon}\setminus\Omega^\epsilon}\eta f\nabla\eta\cdot\nabla
    f+
    \int_{\Omega^{2\epsilon}}\eta^2|\nabla f|^2\nonumber\\
    &\leq
    \frac{1}{\epsilon^2}\underbrace{\int_{\Omega^{2\epsilon}\setminus\Omega^\epsilon}f^2}_A+
    \frac{2}{\epsilon}\int_{\Omega^{2\epsilon}\setminus\Omega^\epsilon}|f||\nabla
    f|+\int_{\Omega^{2\epsilon}}|\nabla f|^2\nonumber\\
    &\leq
    \frac{1}{\epsilon^2}A+
    \frac{2}{\epsilon}A^{1/2}
    \left(
      \int_{\Omega^{2\epsilon}\setminus\Omega^\epsilon}|\nabla  f|^2
    \right)^{1/2}
    +\int_{\Omega^{2\epsilon}}|\nabla f|^2\nonumber\\
    &\leq\label{var1}
    \frac{1}{\epsilon^2}A+
    \frac{2}{\epsilon}A^{1/2}
    \lambda^{1/2}+\lambda.
  \end{align}
  In order to give an upper bound for
  \begin{gather*}
    A=\int_{\Omega^{2\epsilon}\setminus\Omega^\epsilon}f^2,
  \end{gather*}
  an auxiliary cutoff function
  $\chi:M\rightarrow\mathbb{R}$ is introduced, satisfying
  \begin{gather}\label{deffunctionchi}
    0\leq\chi\leq
    1,\quad |\nabla\chi|\leq\frac{1}{\epsilon},\nonumber\\
    \chi\equiv
    \begin{cases}
      1\mbox{ in }M\setminus\Omega^{\epsilon},\\
      0\mbox{ in }\Omega.
    \end{cases}
  \end{gather}
  It follows that
  $$A=\int_{\Omega^{2\epsilon}\setminus\Omega^\epsilon}\chi^2f^2\leq
  \int_{M\setminus\Omega}\chi^2f^2=\|\chi f\|^2.$$
  The function $\chi f$ is then used in the variational characterization of
  $\mu$~:
  \begin{align*}
    \mu\|\chi f\|^2
    &\leq \int_{\Omega'\setminus\overline{\Omega}}|\nabla (\chi f)|^2
    \leq 2\int_{\Omega'\setminus\overline{\Omega}} (|\nabla\chi|^2f^2+\chi^2 |\nabla
    f|^2)\\
    &\leq
    \frac{2}{\epsilon^2}\int_{\Omega'\setminus\overline{\Omega}}f^2+2\int_{\Omega'\setminus\overline{\Omega}}|\nabla f|^2\\
    &\leq
    2\left(\frac{1}{\epsilon^2}+\lambda\right).
  \end{align*}
  It follows that
  \begin{gather}\label{BoundOnA}
    A\leq\|\chi f\|^2\leq 2\left(\frac{1+\epsilon^2\lambda}{\epsilon^2\mu}\right),
  \end{gather}
  which is substituted back into inequality~(\ref{var1}) to complete
  the proof.
\end{proof}
The following lemma shows that the test functions $\psi_i$ form an
almost orthonormal family in $L^2$.
\begin{lemma}\label{LemmaAlmostOrtho}
  For each $i,j\leq k$,
  \begin{align*}
    |\langle\psi_i,\psi_j\rangle-\delta_{ij}| \leq 8\epsilon^2\Lambda,
  \end{align*}
  where
  $\Lambda=2\left(\frac{1+\epsilon^2\lambda}{\epsilon^4\mu}\right)$.
\end{lemma}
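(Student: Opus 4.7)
The plan is to turn the inner product $\langle \psi_i,\psi_j\rangle$ into an integral over the annular region $\Omega'\setminus\Omega^\epsilon$, where the bound on $L^2$ mass of the eigenfunctions established implicitly in the proof of Lemma~\ref{LemmaRayleighCutoff} can be applied.

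First I would use orthonormality of $(f_i)$ in $L^2(\Omega')$ to rewrite
\[
\langle\psi_i,\psi_j\rangle-\delta_{ij}
=\int_{\Omega'}(\eta^2-1)\,f_if_j.
\]
Since $\eta\equiv 1$ on $\Omega^\epsilon$, the integrand vanishes there, so the integration actually takes place over $\Omega'\setminus\Omega^\epsilon$. Because $|\eta^2-1|\le 1$ (and in fact $|\eta^2-1|\le 2(1-\eta)$ if one wants to be slightly sharper), a Cauchy--Schwarz estimate reduces matters to bounding $\bigl(\int_{\Omega'\setminus\Omega^\epsilon}f_i^2\bigr)^{1/2}\bigl(\int_{\Omega'\setminus\Omega^\epsilon}f_j^2\bigr)^{1/2}$.

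Next I would invoke the same auxiliary cutoff $\chi$ introduced in~\eqref{deffunctionchi}, which satisfies $\chi\equiv 1$ on $M\setminus\Omega^\epsilon$ and $\chi\equiv 0$ on $\Omega$ with $|\nabla\chi|\le 1/\epsilon$. On $\Omega'\setminus\Omega^\epsilon$ one has $\chi=1$, so for each $f=f_j$,
\[
\int_{\Omega'\setminus\Omega^\epsilon}f^2
\le \int_{\Omega'\setminus\overline{\Omega}}\chi^2 f^2
=\|\chi f\|^2.
\]
Since $\chi f\in H^1_0(\Omega'\setminus\overline{\Omega})$, the variational characterisation of $\mu=\lambda_1(\Omega'\setminus\overline{\Omega})$ applies; this is precisely the computation leading to~\eqref{BoundOnA} in the previous lemma, producing the ready-made bound
\[
\|\chi f\|^2\le \frac{2(1+\epsilon^2\lambda)}{\epsilon^2\mu}=\epsilon^2\Lambda.
\]

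Combining the two ingredients yields an estimate of the order $\epsilon^2\Lambda$ on $|\langle\psi_i,\psi_j\rangle-\delta_{ij}|$, with the stated constant $8$ absorbing the factor of $2$ coming from $|\nabla(\chi f)|^2\le 2(|\nabla\chi|^2 f^2+\chi^2|\nabla f|^2)$ and the further loss incurred in handling $|\eta^2-1|$ via the product inequality $|\eta^2-1|\le(1-\eta)(1+\eta)$. There is no real obstacle beyond careful bookkeeping: the substantive analytic work (bounding the tail mass of eigenfunctions through $\mu$) has already been done in Lemma~\ref{LemmaRayleighCutoff}, and the current lemma is essentially its bilinear counterpart obtained by Cauchy--Schwarz.
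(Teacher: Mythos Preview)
Your approach is correct and rests on the same key ingredient as the paper---the bound $\|\chi f_j\|^2\le\epsilon^2\Lambda$ from~\eqref{BoundOnA}. The paper, however, treats the cases $i=j$ and $i\neq j$ separately: for the diagonal it bounds $1-\|\psi_i\|^2$ directly, while for $i\neq j$ it splits $\int\eta^2f_if_j=\int(\eta^2-1)f_if_j+\int f_if_j$ and controls each piece by $\|\chi f_i\|^2+\|\chi f_j\|^2$ via $|f_if_j|\le f_i^2+f_j^2$. Your single identity $\langle\psi_i,\psi_j\rangle-\delta_{ij}=\int_{\Omega'}(\eta^2-1)f_if_j$ handles both cases at once, and a direct Cauchy--Schwarz over $\Omega'\setminus\Omega^\epsilon$ already yields $\epsilon^2\Lambda$, so your bookkeeping remark about recovering the factor $8$ is unnecessary---your argument actually beats the stated constant. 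The substantive analysis is identical; your organization is simply tidier.
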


\begin{proof}[Proof of Lemma~\ref{LemmaAlmostOrtho}]
  For the case $i=j$,
  the inequality~(\ref{BoundOnA}) implies
  \begin{align*}
    \|\psi_i\|^2\geq \int_{\Omega^\epsilon} f_i^2&=1-\int_{M\setminus\Omega^\epsilon}\chi^2f_i^2
    \geq
    1-2\left(\frac{1+\epsilon^2\lambda}{\epsilon^2\mu}\right),
  \end{align*}
  where the cutoff function $\chi$ has been defined in (\ref{deffunctionchi}).
  Since $\|\psi_i\|^2\leq \|f_i\|^2=1$, this implies
  \begin{gather*}
    |\langle\psi_i,\psi_i\rangle-1| \leq  2\left(\frac{1+\epsilon^2\lambda}{\epsilon^2\mu}\right)=\epsilon^2\Lambda.
  \end{gather*}
  For $i\neq j$,
  \begin{align*}
    |\langle\psi_i,\psi_j\rangle|=
    \vert \int_{\Omega^{2 \epsilon}} \eta^2f_if_j\vert =
    \vert\int_{\Omega^{2 \epsilon}}(\eta^2-1)f_if_j+ \int_{\Omega^{2 \epsilon}}f_if_j\vert.
  \end{align*}
  As $f_i$ and $f_j$ are orthogonal on $\Omega'$,
  \begin{align*}
    \vert \int_{\Omega^{2 \epsilon}}f_if_j\vert =  \vert \int_{\Omega'\setminus\Omega^{2 \epsilon}}f_if_j\vert
    \le  2 \int_{\Omega'\setminus\Omega^{2 \epsilon}}(f_i^2+f_j^2)\le 2(\Vert \chi f_i\Vert^2+\Vert \chi f_j
    \Vert^2),
  \end{align*}
  which together with $1-\eta^2 \le \chi^2$ implies
  \begin{align*}
    \vert\int_{\Omega^{2 \epsilon}}(\eta^2-1)f_if_j\vert  \le 2 (\Vert \chi f_i\Vert^2+\Vert \chi f_j \Vert^2).
  \end{align*}
  Combining this with inequality~(\ref{BoundOnA}) and noting that
  $\max(\lambda_i,\lambda_j)\leq\lambda_k(\Omega')$, then  gives
  \begin{align*}
    \vert \int_{\Omega^{2 \epsilon}} \psi_i\psi_j\vert \le 16
    \frac{1+\epsilon^2\lambda}{\epsilon^2\mu}.
  \end{align*}
\end{proof}

\begin{proof}[Proof of Theorem~\ref{ThmMain}]
  Given $\alpha\in(0,\frac{1}{4})$, let
  $$\epsilon=\left(\frac{\lambda}{\mu}\right)^\alpha\frac{1}{\sqrt{\lambda}}.$$
  Note that by condition~\eqref{ConditionThmMain},
  $\epsilon<1/\sqrt{\lambda}$, which
  implies $\Lambda\leq
  \frac{4}{\epsilon^4\mu}$.
  Thus
  \begin{gather*}
    \Lambda+2\sqrt{\Lambda}\sqrt{\lambda}
    \leq\frac{4}{\epsilon^4\mu}+\frac{4\sqrt{\lambda}}{\epsilon^2\sqrt{\mu}}=
    4\left(\frac{\lambda}{\mu}\right)^{1-4\alpha}\lambda+
    4\left(\frac{\lambda}{\mu}\right)^{\frac{1}{2}-2\alpha}\lambda
    \leq
    8\left(\frac{\lambda}{\mu}\right)^{\frac{1}{2}-2\alpha}\lambda.
  \end{gather*}
  Moreover
  \begin{gather*}
    \epsilon^2\Lambda\leq\frac{4}{\epsilon^2\mu}=4\left(\frac{\lambda}{\mu}\right)^{1-2\alpha}
    \leq 8\left(\frac{\lambda}{\mu}\right)^{\frac{1}{2}-2\alpha}.
  \end{gather*}
  The hypotheses of Lemma~\ref{LemmaBertrandColbois} then holds with
  $$\rho=8\left(\frac{\lambda}{\mu}\right)^{\frac{1}{2}-2\alpha}.$$

\end{proof}

\section{Uniform local stability}
\label{SectionUnifLocSpectralStab}
A family $\mathcal A$ of domains $\Omega \subset M$ is \emph{uniformly
locally stable} if there exists $\epsilon_0>0$ such that
for each $\Omega \in \mathcal A$, a uniform upper bound for
$|\lambda_k(\Omega) -\lambda_k(\Omega^{\eps})|$ holds, in terms of
$\epsilon\leq\epsilon_0$, $k\in\mathbb{N}$, and some geometric quantities depending only on
$\Omega$. The family $\mathcal{A}$
described in the introduction (see Lemma~\ref{LocalStabLemmaIntro}) is a prototypical example of uniform
local stability. Other examples of such
families will be given in Section~\ref{SubsectionFamilies}.

\subsection{Motivating examples}\label{SectionExamples}

The goal of the present section is to give two simple
examples to illustrate the necessity of the
geometric conditions imposed in the construction of the
families in Section~\ref{SubsectionFamilies}.
The first illustrates the need for a lower bound
on the inradius, while the second for a condition of the form
$(\Omega^\eps)_{N\eps} \subset \Omega$ for all $\eps \le
  \eps_0$, as required in Lemma \ref{LocalStabLemmaIntro}.

\subsubsection*{Example $1$}
  The Dirichlet eigenvalues of the ball $\Omega=B(0,r)\subset\mathbb{R}^n$ are
  \begin{gather*}
    \lambda_k(\Omega)=\frac{c(n,k)}{r^2},\quad k\in\mathbb{N},
  \end{gather*}
  where $c(n,k)$ is the $k$-th eigenvalue of a ball of radius one in
  $\mathbb{R}^n$.
  It follows that
  \begin{align*}
    \lambda_k(\Omega)-\lambda_k(\Omega^\eps)
    =c(n,k)\eps
    \left(
      \frac{2r+\epsilon}{r^2(\epsilon+r)^2}
    \right).
  \end{align*}
  In order to have a uniform upper bound on
  $\lambda_k(\Omega)-\lambda_k(\Omega^\eps)$, it is necessary to
  consider balls $B(0,r)$ of radius $r$ bounded below, say
  by $r_0>0$.
  Consider the family
  \begin{gather*}
    \mathcal{A}=\mathcal{A}(n,r_0)=\bigl\{B(p,r)\,:\,p\in\mathbb{R}^n,r\geq r_0\bigr\}.
  \end{gather*}
  For any ball $\Omega\in\mathcal{A}$, and for any
  $\epsilon\leq\epsilon_0:=r_0$, one easily sees that
  \begin{gather*}
    \lambda_k(\Omega)-\lambda_k(\Omega^\eps)\leq \frac{3c(n,k)}{r_0^2}\frac{\epsilon}{r_0}.
  \end{gather*}
  In the more general context of Section~\ref{SubsectionFamilies},
  this will translate into lower bounds on the inradius.

\subsubsection*{Example $2$}
  Consider the family
  $\mathcal{A}=\bigl\{\Omega_t\,:\,t\in (0,1)\bigr\}$
  \begin{figure}[h]
    \centering
    \psfrag{1}[][][1]{$t$}
   \psfrag{2}[][][1]{$1$}
   \includegraphics[width=4cm]{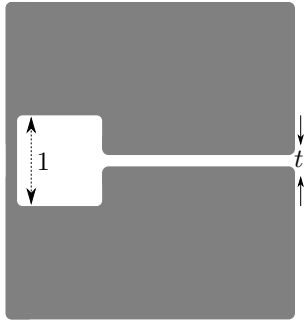}
    \caption{The domains $\Omega_t$, $0<t<1$.}
    \label{figure:one}
  \end{figure}
  described in  Figure~\ref{figure:one}. Some of the features of this family are:
  \begin{enumerate}
  \item The inradius of $\Omega_t$ is uniformly bounded.
  \item The boundary $\partial\Omega_t$ is smooth and its curvature is
    uniformly bounded.
  \end{enumerate}
  In spite of these two properties, this family is not uniformly
  locally stable. Indeed, for each $\epsilon_0>0$, choosing
  $t\leq\epsilon_0$
  leads to discontinuous variations of  $\lambda_k$ as $\epsilon$
  varies from $0$ to $\epsilon_0$. This follows from the fact that
  $\Omega_t^\epsilon$ is completely different from $\Omega_t$ since at
  $\epsilon=\epsilon_0/2$, the domain $\Omega_t^\epsilon$ becomes
  doubly connected.

  In Lemma~\ref{LocalStabLemmaIntro}, this situation was avoided by
  requiring $(\Omega^\eps)_{N\eps} \subset \Omega$ for all $\eps \le
  \eps_0$, and we will need a condition of this type to be satisfied throughout.

\subsection{Convex domains}
Local spectral stability is particularly simple for convex
domains. Given $r_0>0$, consider the family
\begin{gather*}
  \mathcal{A}=\mathcal{A}(n,r_0)=
  \bigl\{
  \Omega\subset\mathbb{R}^n\,:\Omega\mbox{ is convex, }
  \mbox{inradius}(\Omega)\geq r_0
  \bigr\}.
\end{gather*}
\begin{proposition}\label{PropositionLocalStabConvex}
  For any domain $\Omega\in\mathcal{A}$,
  \begin{gather*}
    \lambda_k(\Omega)-\lambda_k(\Omega^\epsilon)\leq
    \frac{c(n,k)}{r_0^2}\left(\frac{2\epsilon}{r_0}+\frac{\epsilon^2}{r_0^2}\right).
  \end{gather*}
\end{proposition}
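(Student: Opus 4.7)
The plan is to compare the thickening $\Omega^\epsilon$ to a homothetic dilation of $\Omega$, exploiting convexity. Fix $\Omega\in\mathcal{A}$ and an inscribed ball $B(x_0,r_0)\subset\Omega$. Set $s=1+\epsilon/r_0$ and let $T_s$ denote the homothety centered at $x_0$ with ratio $s$. The central geometric claim is the containment
$$\Omega^\epsilon\subset T_s(\Omega).$$

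To prove it, fix $y\in\Omega^\epsilon$ with a nearest point $q\in\overline{\Omega}$, so $|y-q|\le\epsilon$. Write $\alpha=1/s=r_0/(r_0+\epsilon)$ and put $z=(1-\alpha)x_0+\alpha y$, so that $y=T_s(z)$. Set also $w=(1-\alpha)x_0+\alpha q$. Since $\overline{\Omega}$ is convex and contains both $B(x_0,r_0)$ and $q$, the ball
$$\overline{B\bigl(w,(1-\alpha)r_0\bigr)}=(1-\alpha)\overline{B(x_0,r_0)}+\alpha q$$
lies in $\overline{\Omega}$. Because $z-w=\alpha(y-q)$, one computes
$$|z-w|\le\alpha\epsilon=\frac{r_0\epsilon}{r_0+\epsilon}=(1-\alpha)r_0,$$
so $z\in\overline{\Omega}$ and hence $y\in T_s(\overline{\Omega})$.

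With the containment established, monotonicity of Dirichlet eigenvalues and their homogeneity of degree $-2$ yield $\lambda_k(\Omega^\epsilon)\ge\lambda_k(T_s(\Omega))=s^{-2}\lambda_k(\Omega)$, whence
$$\lambda_k(\Omega)-\lambda_k(\Omega^\epsilon)\le\bigl(1-s^{-2}\bigr)\lambda_k(\Omega)=\frac{2r_0\epsilon+\epsilon^2}{(r_0+\epsilon)^2}\,\lambda_k(\Omega).$$
Since $B(x_0,r_0)\subset\Omega$, a second application of monotonicity gives $\lambda_k(\Omega)\le\lambda_k(B(x_0,r_0))=c(n,k)/r_0^2$, and bounding $(r_0+\epsilon)^{-2}\le r_0^{-2}$ produces the stated estimate.

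The only substantive step is the geometric containment $\Omega^\epsilon\subset T_s(\Omega)$; the rest is a routine monotonicity-and-scaling computation. This containment is standard for convex bodies and their outer parallel sets, and the explicit convex-combination argument above makes the dependence on $r_0$ transparent, which is what allows the invariant form of the bound (in contrast to Lemma~\ref{LocalStabLemmaIntro}).
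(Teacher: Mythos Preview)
Your proof is correct and follows the same overall strategy as the paper: establish the containment $\Omega^\epsilon\subset T_s(\Omega)$ (this is the paper's Lemma~\ref{LemmaConvexDilation}), and then combine monotonicity with the scaling law $\lambda_k(T_s(\Omega))=s^{-2}\lambda_k(\Omega)$ and the bound $\lambda_k(\Omega)\le c(n,k)/r_0^2$.

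The difference lies only in how the containment is proved. The paper argues along rays from the center $x_0$: for $y\in\partial\Omega$ it locates the point $y'\in\partial\Omega^\epsilon$ on the ray $x_0y$, drops a perpendicular to a tangent line of $B(x_0,r_0)$, and uses Thales' theorem together with the auxiliary fact $(\Omega^\epsilon)_\epsilon=\Omega$ (Lemma~\ref{LemmaDilationContraction}) to bound $\overline{x_0y'}/\overline{x_0y}$. Your argument is more direct: it shows that $T_s^{-1}(y)$ lands in the Minkowski combination $(1-\alpha)\overline{B(x_0,r_0)}+\alpha q\subset\overline{\Omega}$, which is immediate from convexity. This avoids both the picture-dependent tangent construction and the separate lemma $(\Omega^\epsilon)_\epsilon=\Omega$, at the cost of being slightly less geometric. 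One small point: you conclude $y\in T_s(\overline{\Omega})$, but since $d(y,\Omega)<\epsilon$ strictly, the inequality $|z-w|<(1-\alpha)r_0$ is strict as well, and the standard fact that segments from interior points to closure points of a convex set lie in the interior gives $z\in\Omega$, hence $\Omega^\epsilon\subset T_s(\Omega)$ as open sets.
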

\begin{lemma}\label{LemmaDilationContraction}
  Let $\Omega$ be a convex domain. Then $(\Omega^\epsilon)_\epsilon=\Omega$.
\end{lemma}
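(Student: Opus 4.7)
The plan is to prove the two inclusions separately. The inclusion $\Omega \subset (\Omega^\epsilon)_\epsilon$ in fact holds for any open set and does not use convexity. If $x \in \Omega$, choose $r>0$ with $B(x,r)\subset\Omega$. Since $\Omega^\epsilon$ is the union of all open $\epsilon$-balls centered at points of $\Omega$, one sees that $B(x,r+\epsilon)\subset\Omega^\epsilon$, so $d(x,(\Omega^\epsilon)^c)\geq r+\epsilon>\epsilon$ and thus $x\in(\Omega^\epsilon)_\epsilon$. This step is routine.

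For the reverse inclusion $(\Omega^\epsilon)_\epsilon\subset\Omega$, I would argue by contrapositive: given $x\notin\Omega$, I will exhibit a point $y$ with $d(x,y)=\epsilon$ and $y\in(\Omega^\epsilon)^c$, thereby contradicting the defining condition $d(x,(\Omega^\epsilon)^c)>\epsilon$ of $(\Omega^\epsilon)_\epsilon$. Two subcases arise. If $x\notin\overline{\Omega}$, let $p$ be the (unique, by convexity of $\overline{\Omega}$) nearest point to $x$ in $\overline{\Omega}$, set $v=(x-p)/|x-p|$, and define $y=x+\epsilon v$. The standard projection inequality $\langle z-p,v\rangle\leq 0$ for all $z\in\overline{\Omega}$ then gives $|y-z|\geq\langle y-z,v\rangle\geq |x-p|+\epsilon\geq\epsilon$, so $d(y,\Omega)\geq\epsilon$ and $y\in(\Omega^\epsilon)^c$, as required. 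If instead $x\in\partial\Omega$, choose a unit outward normal $v$ to a supporting hyperplane of $\overline{\Omega}$ at $x$, which exists since $\overline{\Omega}$ is closed convex; the same formula $y=x+\epsilon v$ works by the analogous calculation using $\langle z-x,v\rangle\leq 0$ for $z\in\overline{\Omega}$.

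The main (though minor) obstacle is the boundary case $x\in\partial\Omega$, where the nearest-point projection degenerates to $p=x$ and one must instead invoke the supporting hyperplane theorem to produce the outward direction $v$. Combining both subcases with the easy inclusion yields $(\Omega^\epsilon)_\epsilon=\Omega$.
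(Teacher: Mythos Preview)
Your proof is correct and follows essentially the same approach as the paper: for $x\notin\Omega$, use a separating (or supporting) hyperplane of the convex set $\Omega$ to produce a point $y$ at distance $\epsilon$ from $x$ with $d(y,\Omega)\geq\epsilon$, hence $y\in(\Omega^\epsilon)^c$. The paper treats both of your subcases in one stroke via the separation theorem and omits the easy inclusion $\Omega\subset(\Omega^\epsilon)_\epsilon$, but the substance is identical.
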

\begin{proof}[Proof of Lemma~\ref{LemmaDilationContraction}]
  A point $x\in\mathbb{R}^n$ lies in $(\Omega^\epsilon)_\epsilon$ if
  and only if
  $\overline{B(x,\epsilon)}\subset\Omega^\epsilon$. Suppose that
  $x\notin\Omega$. Then there exists a hyperplane separating $x$ from
  $\Omega$. This implies the existence of
  $y\in\overline{B(x,\epsilon)}$ such that
  $d(y,\Omega)\geq\epsilon$, which contradicts
  $\overline{B(x,\epsilon)}\subset\Omega^\epsilon$.
\end{proof}
\begin{lemma}\label{LemmaConvexDilation}
  Let $r_0>0$. Let $\Omega\subset\mathbb{R}^n$ be a bounded convex
  Euclidean domain such that $B(x,r_0)\subset\Omega$. Then
  $$\Omega^\epsilon\subset H(\Omega)$$
  where $H$ is an homothety of factor $1+\epsilon/r_0$ with
  center $x$.
\end{lemma}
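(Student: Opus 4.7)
The plan is to reduce to the case $x=0$ by translation invariance, so that the homothety becomes $H(y)=(1+\epsilon/r_0)y$ and the sought inclusion $\Omega^\epsilon\subset H(\Omega)$ is equivalent to $\lambda\,\Omega^\epsilon\subset\Omega$, where $\lambda:=r_0/(r_0+\epsilon)\in(0,1)$ is the reciprocal of the dilation factor. It therefore suffices to show that for every $z\in\Omega^\epsilon$ the rescaled point $\lambda z$ belongs to $\Omega$.

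Given such a $z$, I would pick $y\in\Omega$ and $v\in\mathbb{R}^n$ with $|v|<\epsilon$ so that $z=y+v$ (possible by the open definition of $\Omega^\epsilon$). The key observation is that $\lambda z$ can be rewritten as a convex combination
$$\lambda z=\lambda y+(1-\lambda)w,\qquad w:=\tfrac{\lambda}{1-\lambda}\,v=\tfrac{r_0}{\epsilon}\,v.$$
Since $|v|<\epsilon$, the auxiliary point $w$ satisfies $|w|<r_0$, so that $w\in B(0,r_0)\subset\Omega$. Convexity of $\Omega$ then forces $\lambda z\in\Omega$, which is exactly what was required.

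There is essentially no obstacle here: the whole argument rests on the identity $\lambda/(1-\lambda)=r_0/\epsilon$, which is precisely the algebraic reason why the dilation factor $1+\epsilon/r_0$ is the natural one. The only item requiring any care is to track the strict inequality $|v|<\epsilon$, which guarantees that the auxiliary point $w$ genuinely lies in the open ball $B(0,r_0)$ assumed to be contained in $\Omega$.
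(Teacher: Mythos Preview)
Your proof is correct and is in fact cleaner than the paper's own argument. The paper proceeds geometrically: for each boundary point $y\in\partial\Omega$ it finds the corresponding boundary point $y'\in\partial\Omega^\epsilon$ on the ray from $x$ through $y$, drops a tangent line from $y$ to the ball $B(x,r_0)$, and uses Thales' theorem on the resulting similar triangles to bound the ratio $\overline{xy'}/\overline{xy}$ by $1+\epsilon/r_0$; along the way it invokes the preceding Lemma~\ref{LemmaDilationContraction} that $(\Omega^\epsilon)_\epsilon=\Omega$ for convex $\Omega$ to ensure $\overline{zy'}<\epsilon$. Your argument bypasses all of this by writing $\lambda z$ directly as a convex combination of a point of $\Omega$ and a point of $B(0,r_0)$, which is both shorter and self-contained (no auxiliary lemma needed). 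The paper's route has the minor advantage of being more pictorial, but your convex-combination identity $\lambda z=\lambda y+(1-\lambda)\tfrac{r_0}{\epsilon}v$ captures exactly why the factor $1+\epsilon/r_0$ is the right one and would be a worthwhile simplification.
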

  \begin{figure}[h]
    \centering
    \psfrag{x}[][][1]{$x$}
    \psfrag{x'}[][][1]{$x'$}
    \psfrag{y}[][][1]{$y$}
    \psfrag{y'}[][][1]{$y'$}
    \psfrag{z}[][][1]{$z$}
    \includegraphics[width=7cm]{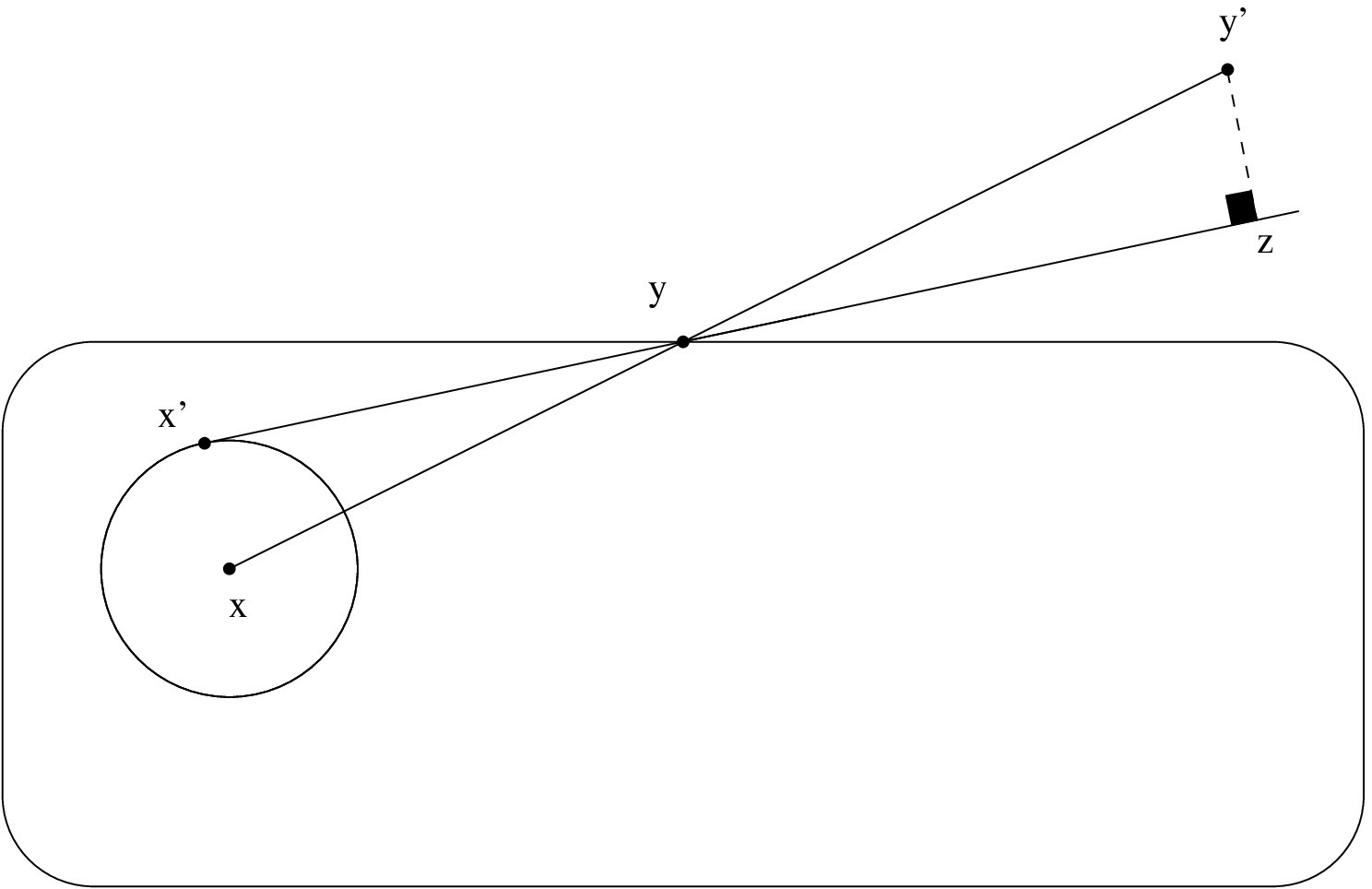}
    \caption{Proof of Lemma~\ref{LemmaConvexDilation}}
    \label{figure:convex}
  \end{figure}
\begin{proof}
  Fix $y\in\partial\Omega$.
  There is a unique point  $y'\in\partial\Omega^\epsilon$ such that
  $y\in xy'$, where $xy'$ denotes the segment connecting $x$ and $y'$.
  There also exists a unique point $x'\in\partial B(x,r_0)$
  such that the segment $x'y$ is tangent to $\partial B(x,r_0)$. Let $z$ be the orthogonal
  projection of $y'$ onto the line through $x'$ and $y$.
  It follows from Thales' theorem that we can compare the lengths of the segments to get
  \begin{gather*}
    \frac{\overline{zy'}}{\overline{yy'}}=
    \frac{\overline{xx'}}{\overline{xy}}=\frac{r_0}{\overline{xy}}.
  \end{gather*}
  Together with Lemma \ref{LemmaDilationContraction} this implies
  \begin{gather*}
    \frac{\overline{yy'}}{\overline{xy}}=
    \frac{\overline{zy'}}{r_0}<\frac{\epsilon}{r_0}.
  \end{gather*}
  Noting that $\overline{yy'}=\overline{xy'}-\overline{xy}$, we then have
  \begin{gather*}
    \frac{\overline{xy'}}{\overline{xy}}<1+\frac{\epsilon}{r_0}.
  \end{gather*}
  This gives $y'\in H(\Omega),$
  and so completes the proof as $y\in\partial\Omega$ was arbitrary.
\end{proof}

\subsection{A result of E. B. Davies}
In studying the behaviour of the spectrum of the outer perturbation
$\Omega^\eps$ we rely on a result of Davies~~\cite{Davies3} related to
the stability of the inner perturbation $\Omega_\epsilon$.
In the sequel, we will consider bounded domains $\Omega \subset M$, where
$M$ is a complete Riemannian manifold with Ricci curvature bounded
from below by $-1$. We will also denote by $r_0$ the inradius of
$\Omega$, i.e. the radius of the largest ball included in $\Omega$.

\begin{theorem}\label{dav}
  Let $\Omega \subset M$ satisfy a weak Hardy inequality with constants
  $a, b$ and inradius bounded below by $r_0>0$. Then for each $k\in\mathbb{N}$, and each $0<\alpha \le 1/\sqrt{a},$
  there exists constants $\eps_k$ and $C_k$ depending
  only on $\alpha,a,b,r_0,k$, such that
  $$0 \le \lambda_k(\Omega_\eps)-\lambda_k(\Omega)\le C_k \eps^{2 \alpha},$$
  for any $0<\eps< \eps_k.$
\end{theorem}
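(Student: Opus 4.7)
The lower bound $\lambda_k(\Omega_\eps)\geq\lambda_k(\Omega)$ is immediate from the monotonicity of Dirichlet eigenvalues under $\Omega_\eps\subset\Omega$. For the upper bound the plan is a test-function argument of the kind already used in the proof of Theorem~\ref{ThmMain}. Let $(f_i)_{i=1}^k$ be the first $k$ orthonormal Dirichlet eigenfunctions of $\Omega$, with eigenvalues $\lambda_i\leq\lambda_k:=\lambda_k(\Omega)$, and let $\delta(x)=d(x,\partial\Omega)$. I would choose a smooth cutoff $\eta_\eps(x)=\Phi(\delta(x)/\eps)$ with $\Phi\equiv 0$ on $[0,1]$, $\Phi\equiv 1$ on $[2,\infty)$ and $|\Phi'|\leq 2$, so that each $\psi_i:=\eta_\eps f_i$ lies in $H^1_0(\Omega_\eps)$ and will serve as a test function.

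The decisive technical ingredient is a weighted Hardy estimate of the form
$$\int_\Omega\frac{f_i^2}{\delta^{2+2\alpha}}\leq K(\alpha,a,b,r_0,\lambda_k),\qquad 0<\alpha\leq 1/\sqrt{a},$$
obtained by bootstrapping the weak Hardy inequality: one substitutes (regularized) test functions of the form $\delta^{-\alpha}u$ into the weak Hardy inequality, expands $|\nabla(\delta^{-\alpha}u)|^2$ using $|\nabla\delta|\leq 1$, and absorbs the resulting term $\alpha^2\int u^2\delta^{-2-2\alpha}$ on the left. The absorption is precisely what forces the threshold $\alpha\leq 1/\sqrt{a}$. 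The inradius lower bound $r_0$ is then invoked to trade the remaining weighted integrals on bounded regions away from $\partial\Omega$ for the unweighted quantities $\|f_i\|^2=1$ and $\|\nabla f_i\|^2=\lambda_i\leq\lambda_k$, giving the displayed \emph{a priori} bound.

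With this in hand the Rayleigh quotient and almost-orthogonality estimates for $\psi_i$ are routine. On the strip $\{\eps<\delta<2\eps\}$ one has $\delta^{-(2+2\alpha)}\geq(2\eps)^{-(2+2\alpha)}$, so
$$\int_{\{\delta<2\eps\}}f_i^2\leq(2\eps)^{2+2\alpha}\int_\Omega\frac{f_i^2}{\delta^{2+2\alpha}}\leq C\eps^{2+2\alpha},$$
which yields $\|\psi_i-f_i\|^2\leq C\eps^{2+2\alpha}$, $\int|\nabla\eta_\eps|^2 f_i^2\leq 4\eps^{-2}\int_{\{\delta<2\eps\}}f_i^2\leq C\eps^{2\alpha}$, and $|\langle\psi_i,\psi_j\rangle-\delta_{ij}|\leq C\eps^{2+2\alpha}$ via Cauchy--Schwarz on $\int(1-\eta_\eps^2)f_if_j$. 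Using $\eta_\eps^2 f_i$ as a test function in the weak form of $-\Delta f_i=\lambda_i f_i$ gives
$$\int\eta_\eps^2|\nabla f_i|^2+2\int\eta_\eps f_i\,\nabla\eta_\eps\cdot\nabla f_i=\lambda_i\int\eta_\eps^2 f_i^2\leq\lambda_k,$$
so the cross term in the expansion $\|\nabla\psi_i\|^2=\int|\nabla\eta_\eps|^2 f_i^2+2\int\eta_\eps f_i\,\nabla\eta_\eps\cdot\nabla f_i+\int\eta_\eps^2|\nabla f_i|^2$ is eliminated and one obtains $\|\nabla\psi_i\|^2\leq\lambda_k+C\eps^{2\alpha}$. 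Feeding these bounds into Lemma~\ref{LemmaBertrandColbois} with $\rho=C\eps^{2\alpha}$, and choosing $\eps_k$ small enough that $4\rho a_k\leq 1$, produces an orthonormal family in $\mathrm{span}(\psi_1,\ldots,\psi_k)\subset H^1_0(\Omega_\eps)$ each of whose Rayleigh quotients is at most $\lambda_k(1+b_k\rho)$, giving $\lambda_k(\Omega_\eps)-\lambda_k(\Omega)\leq C_k\eps^{2\alpha}$. The sole conceptual obstacle is the derivation of the weighted Hardy estimate in the full range $\alpha\leq 1/\sqrt{a}$, where the hypothesis on $\alpha$ is consumed; once that bound is in place, the remainder is bookkeeping that parallels the proof of Theorem~\ref{ThmMain}.
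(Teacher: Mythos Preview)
Your overall architecture matches the paper's: cut off the first $k$ eigenfunctions by a function of $\delta/\eps$, use a weighted Hardy estimate of the form $\int_\Omega f_i^2\,\delta^{-2-2\alpha}\le K$ to control what happens on the shell $\{\delta<2\eps\}$, and plug the resulting test functions into a min--max argument. The paper does exactly this, except that it quotes the weighted inequality as a black box (Theorem~\ref{dav2}, due to Davies) and uses the standard min--max directly rather than Lemma~\ref{LemmaBertrandColbois}. Your device of testing the eigenvalue equation against $\eta_\eps^2 f_i$ to kill the cross term is a genuine simplification over the paper's treatment, which instead controls $\int_S|\nabla f_i|^2$ via the second inequality of Theorem~\ref{dav2}.

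There are, however, two points where your sketch does not quite close.

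First, the bootstrapping step is incomplete. When you substitute $\delta^{-\alpha}f_i$ into the weak Hardy inequality and expand, the right-hand side contains not only $\alpha^2\int f_i^2\,\delta^{-2-2\alpha}$ (which you absorb, correctly explaining the threshold $\alpha<1/\sqrt a$) but also $\int|\nabla f_i|^2\,\delta^{-2\alpha}$. This weighted gradient integral is \emph{not} a ``bounded region away from $\partial\Omega$'' term; it is itself a boundary-singular quantity that must be estimated, and nothing in your outline does so. In the paper this is precisely the second inequality of Theorem~\ref{dav2}, whose proof (in Davies' papers) requires an additional operator-theoretic argument beyond the single substitution you describe. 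Note also that this absorption gives only $\alpha<1/\sqrt a$; the endpoint $\alpha=1/\sqrt a$ needs a separate argument, which the paper too defers to \cite{Davies2}.

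Second, your account of where the inradius enters is off. The inradius is not used to dispose of weighted integrals away from the boundary; rather, once the weighted Hardy estimate produces a bound of size $(\lambda_k+b)^{3/2}$, one still needs $\lambda_k(\Omega)$ to be controlled by the stated parameters alone. The paper does this via Cheng's comparison, $\lambda_k\le (n-1)^2/4+\tilde c(n,k)/r_0^2$, which is the only place $r_0$ is used.
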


\medskip
The theorem is proved in \cite{Davies3} for more general
operators, with $\Omega \subset \R^n$. In \cite[Theorem 13]{Davies2}
Davies extends the result to include the sharp exponent. For
insight we outline Davies' proof for the situation we are
considering. The proof will make use of the following result
proved in \cite[Theorem 14]{Davies3}.
 The fact that $c$ depends only on $a,\alpha$ can be deduced from
  a careful reading of the proofs  in~\cite{Davies3} and ~\cite{Davies2}.

\begin{theorem}\label{dav2}
  Let $\Omega$ satisfy a weak Hardy inequality with constants $a, b$  and
  let $0<\alpha< 1/\sqrt{a}.$ Then there exists a constant $c$
  depending only on $a,\alpha$, such
  that for each $u \in \mbox{Dom}(\Delta)$,
  $$\int_{\Omega} \frac{u^2}{\delta^{2+2\alpha}}
  \le c \lVert (-\Delta + b) u \rVert_2 \lVert (-\Delta + b)^{1/2} u
  \rVert_2,$$
  and
  $$\int_{\Omega} \frac{|\nabla u|^2}{\delta^{2\alpha}}
  \le c \lVert (-\Delta + b) u \rVert_2 \lVert (-\Delta + b)^{1/2} u \rVert_2,$$
  where $\delta$ again denotes the distance function to the boundary of $\Omega$.
\end{theorem}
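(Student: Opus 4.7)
The plan is to adapt Davies's approach from~\cite{Davies3,Davies2}, first establishing both estimates for $u \in C_0^\infty(\Omega)$ and then extending to $\mathrm{Dom}(\Delta)$ by density. Throughout, $|\nabla \delta|\le 1$ a.e., and writing $H := -\Delta + b$ we have the identity $\lVert H^{1/2} u\rVert_2^2 = \int_\Omega(|\nabla u|^2 + bu^2)$ by integration by parts. The two asserted inequalities will emerge together from a weighted refinement of the Hardy inequality combined with one integration by parts against $H$.

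The first step is a weighted Hardy inequality: for $u\in C_0^\infty(\Omega)$ and $\alpha \in (0,1/\sqrt{a})$,
\begin{gather*}
  \int_\Omega \frac{u^2}{\delta^{2+2\alpha}} \le C_1(a,\alpha) \int_\Omega \frac{|\nabla u|^2 + bu^2}{\delta^{2\alpha}}.
\end{gather*}
I would prove this by substituting $v := u\,\delta^{-\alpha}$ (which lies in $C_0^\infty(\Omega)$ because $u$ vanishes in a neighborhood of $\partial\Omega$) into the weak Hardy inequality. Expanding $|\nabla v|^2$ gives a $\delta^{-2\alpha}|\nabla u|^2$ term, an $\alpha^2 u^2 \delta^{-2-2\alpha}|\nabla\delta|^2$ term, and a cross term handled by Cauchy--Schwarz; the hypothesis $a\alpha^2<1$ is exactly what permits absorbing the excess $\int u^2/\delta^{2+2\alpha}$ back into the left side.

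Next, integration by parts yields
\begin{gather*}
  \int_\Omega \frac{|\nabla u|^2 + bu^2}{\delta^{2\alpha}} = \int_\Omega \frac{u\,Hu}{\delta^{2\alpha}} + 2\alpha \int_\Omega \frac{u\,\nabla u\cdot\nabla\delta}{\delta^{2\alpha+1}}.
\end{gather*}
Cauchy--Schwarz bounds the last integral by $2\alpha(\int u^2/\delta^{2+2\alpha})^{1/2}(\int |\nabla u|^2/\delta^{2\alpha})^{1/2}$, which via the weighted Hardy inequality and Young's inequality is absorbed into the left side. For the first term on the right, I would apply Cauchy--Schwarz in the form $\lVert Hu\rVert_2\,\lVert u/\delta^{2\alpha}\rVert_2$ and then interpolate $\lVert u/\delta^{2\alpha}\rVert_2$ by H\"older between $(\int u^2/\delta^{2+2\alpha})^{1/2}$ and a lower-weight norm controlled by $\lVert H^{1/2} u\rVert_2$ through the weighted Hardy inequality and the original Hardy inequality; a final absorption step then delivers both asserted estimates simultaneously.

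The main technical obstacle is carrying this out while keeping the final constant $c$ dependent only on $a$ and $\alpha$---not on $b$, on the diameter, or on the inradius of $\Omega$---which is precisely the ``careful reading'' flagged in the excerpt. The delicate point is to organise the absorption thresholds and the H\"older interpolation in a scale-homogeneous way so that intermediate $b$-dependent factors cancel; once the estimates are established on $C_0^\infty(\Omega)$, the extension to $u\in \mathrm{Dom}(\Delta)$ is a routine density argument.
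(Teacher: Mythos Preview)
The paper does not prove Theorem~\ref{dav2}: it is quoted from \cite[Theorem~14]{Davies3}, with the remark that the dependence $c=c(a,\alpha)$ ``can be deduced from a careful reading of the proofs in~\cite{Davies3} and~\cite{Davies2}.'' So there is nothing in the paper to compare against; your sketch is a reconstruction of Davies's argument, and in spirit it is the right one --- the substitution $v=u\delta^{-\alpha}$ into the Hardy inequality followed by an integration by parts against $Hu$ is exactly the mechanism Davies uses.

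One point to watch: the absorption after the integration by parts is not as automatic as you suggest. The cross term $2\alpha\int u\,\nabla u\cdot\nabla\delta/\delta^{2\alpha+1}$ is bounded by $2\alpha\sqrt{C_1(a,\alpha)}\int(|\nabla u|^2+bu^2)/\delta^{2\alpha}$ via your weighted Hardy inequality, but $C_1(a,\alpha)\to\infty$ as $\alpha\uparrow 1/\sqrt{a}$, so a direct absorption fails near the endpoint. Likewise, your proposed H\"older interpolation of $\lVert u/\delta^{2\alpha}\rVert_2$ between $\int u^2/\delta^{2+2\alpha}$ and a ``lower-weight norm'' runs into trouble for small $\alpha$: if $4\alpha<2$ you are forced down to $\lVert u\rVert_2$, which introduces a factor $b^{-1}$ through $\lVert u\rVert_2^2\le b^{-1}\lVert H^{1/2}u\rVert_2^2$. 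Davies circumvents both issues by an operator-theoretic bootstrap (bounding $\delta^{-1-\alpha}H^{-(1+\alpha)/2}$ via commutators with $\delta^{-1}H^{-1/2}$), which is what makes the $b$-independence come out cleanly. Your outline is correct at the level of strategy, but the ``scale-homogeneous'' organisation you allude to in the last paragraph is genuinely the heart of the matter, and the sketch as written does not yet show how to achieve it.
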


\begin{proof}[Proof of Theorem \ref{dav}.]
    For each $k\in\mathbb{N}$, the span of the first $k$ eigenfunctions $\phi_i,
    i=1,2,\cdots,k$ of $\Omega$ is denoted $\Lambda_k$, and we write $\lambda_k=\lambda_k(\Omega)$ for the eigenvalues of $\Omega$. Given $u \in\Lambda_k$ with $\lVert u
    \rVert_2 =1,$
    let $0<\epsilon<r_0/2$, and write $S=\Omega\setminus \Omega_{2\eps}$.
    Define the cut-off function $\chi:\Omega\rightarrow\mathbb{R}$ by
    $$\chi(x)=
    \begin{cases}0 & \mbox{ if }x \in \Omega_{\eps}^c, \\
      \eps^{-1} \mbox{dist}(x, \partial \Omega_{\eps})  &\mbox{ if } x \in \Omega_\epsilon\setminus\Omega_{2\epsilon} ,\\
      1 &\mbox{ if } x \in \Omega_{2\eps}.
    \end{cases}$$
    Using that $\delta\leq 2\epsilon$ and $|\nabla\chi|\leq
    1/\eps$ on $S$ leads to
    \begin{align*}
      \left|
        \int_{\Omega}|\nabla(\chi u)|^2 - \int_{\Omega}|\nabla u|^2
      \right|
       & \le  \left|2\left(\int_{S}|\chi \nabla u|^2  + \int_{S}|u \nabla\chi|^2 \right) - \int_{\Omega}|\nabla u|^2 \right|\\
      & \le 2\int_{S}|u \nabla\chi|^2 + \int_{S}|\nabla u|^2\\
      & \le  \frac{2}{\eps^2} (2\eps)^{2+2\alpha} \int_{S}\frac{|u|^2}{\delta^{2+2\alpha}}
      + (2\eps)^{2\alpha} \int_{S} \frac{|\nabla u|^2}{\delta^{2\alpha}}.
    \end{align*}
    Theorem~\ref{dav2} then gives
    \begin{align*}
      \left|
        \int_{\Omega}|\nabla(\chi u)|^2 - \int_{\Omega}|\nabla u|^2
      \right|
      &
      \leq
      \eps^{2\alpha} (2^{3+2\alpha}+2^{2\alpha})c \lVert (-\Delta + b) u \rVert_2 \lVert (-\Delta + b)^{1/2} u \rVert_2 \\
      & \le  \eps^{2\alpha} 2^{2\alpha}9 c (\lambda_k + b )^{3/2} \lVert  u \rVert^2_2 = \eps^{2\alpha} 2^{2\alpha}9c (\lambda_k + b )^{3/2}.
    \end{align*}
This holds for any $0<\alpha< 1/\sqrt{a}$ with $c=c(a, \alpha).$
For the last inequality we have used that $u \in\Lambda_k$ is of the form
$u=\sum_{i=1}^k \alpha_i \phi_i$ giving $ -\Delta u =\sum_{i=1}^k \lambda_i \alpha_i \phi_i$ for some $\alpha_i \in \R,$
and thereby
\begin{gather*}
  \lVert (-\Delta + b) u \rVert_2 \le (\lambda_k + b) \lVert  u
\rVert_2,\\
\lVert (-\Delta + b)^{1/2} u \rVert_2 \le (\lambda_k + b)^{1/2} \lVert  u \rVert_2.
\end{gather*}
Next we estimate
\begin{align*}   \int_{\Omega}u^2 -\int_{\Omega}(\chi u)^2 & \le \int_{S}u^2 \le \eps^{2+2\alpha} \int_{S}\frac{|u |^2}{\delta^{2+2\alpha}} \\
& \le  \eps^{2+2\alpha} c \lVert (-\Delta + b) u \rVert_2 \lVert (-\Delta + b)^{1/2} u \rVert_2 \quad (\mbox{by Theorem \ref{dav2})}\\
& \le  \eps^{2+2\alpha} c (\lambda_k + b )^{3/2}.
\end{align*}
Combining these two estimates with the min-max principle finally gives
\begin{align*}
  \lambda_k(\Omega_{\eps}) & \le
  \sup\left\{\frac{\lVert \nabla v \rVert^2_2}{\lVert  v \rVert^2_2} :
    v \in
    \mbox{span}\{\chi \phi_1, \cdots , \chi \phi_k\} \right\}\\
  &=\sup\left\{\frac{\lVert \nabla (\chi u) \rVert^2_2}{\lVert \chi u \rVert^2_2} : u \in\Lambda_k \right\}
\le \frac{\lambda_k + \eps^{2\alpha} 2^{2\alpha}9 c (\lambda_k + b )^{3/2}}{1- \eps^{2+2\alpha} c (\lambda_k + b )^{3/2}}.
\end{align*}
If we take
$$
\eps \le \left(\frac{1}{2c (\lambda_k + b )^{3/2}}\right)^{1/(2+2\alpha)}:=\eps_k,
$$
we have
\begin{align*}
\lambda_k(\Omega_{\eps})-\lambda_k\le \frac{\eps^{2\alpha} 2^{2\alpha}9 c(\lambda_k + b )^{3/2}}{1- \eps^{2+2\alpha} c (\lambda_k + b )^{3/2}}
\le  \eps^{2\alpha} 2^{2\alpha+1}9 c(\lambda_k + b )^{3/2}.
\end{align*}
Using again that the inradius is bounded below by $r_0$, it follows
from the proof of~\cite[Corollary 2.3]{Cheng}
that there exists a constant $\tilde{c}$ depending only on $n$ and $k$
such that
$$
 \lambda_k \le \frac{(n-1)^2}{4}+\frac{\tilde{c}}{r_0^2}.
$$
Putting everything together, we have
\begin{align*}
 \lambda_k(\Omega_{\eps})-\lambda_k \le C_k(\alpha, a,b,r_0)\eps^{2\alpha},
\end{align*}
for all $\eps \le \eps_k=\eps_k(\alpha,a,b,r_0)$. For the
extension to the sharp  exponent $\frac{2}{\sqrt{a}}$ please see
\cite[Theorem 13]{Davies2}.
\end{proof}

\subsection{Examples of locally stable families}
\label{SubsectionFamilies}
In this section we construct families $\mathcal{A}$ of
domains such that for each $k$ and each $\epsilon>0$ small enough, the
difference
$|\lambda_k(\Omega^\epsilon)-\lambda_k(\Omega)|$ is
controlled in terms of $\epsilon$, $k$ and some geometric hypotheses
on $\Omega$. The bounds are uniform in
$\Omega\in\mathcal{A}$.

The goal is to describe families of domains for which we can use
Theorem \ref{dav} to get such uniform estimates.
Because Theorem \ref{dav} relates $\lambda_k(\Omega)$ to
$\lambda_k(\Omega_{\eps})$ rather than to $\lambda_k(\Omega^{\eps})$, we
will need the following conditions to hold for some $\eps_0>0$ in
addition to the lower bound $r_0$ on the inradius.
\begin{itemize}
\item There exists $N>0$ such that
$(\Omega^\eps)_{N\eps}$ is contained in $\Omega$ for each $\eps\le\eps_0$.
\item The sets $\Omega^\eps$ satisfy the weak Hardy inequality
  with constants $a,b$ independent of $\eps\le\eps_0$.
\end{itemize}

This allows control of
$|\lambda_k(\Omega^\eps)-\lambda_k((\Omega^\eps)_{N\eps})|$, and so of
$|\lambda_k(\Omega^{\eps}) -\lambda_k(\Omega)|$ because $(\Omega^\eps)_{N\eps} \subset \Omega$.
In the remainder of this section, we give an exposition of the situation in
Euclidean space $\mathbb{R}^n$.

\medskip
The following three definitions will be used.
\begin{definition}
An open set $\Omega \subset \R^n$ satisfies a \emph{uniform external
  cone condition} ~\cite[p.129]{Davies} with parameters $\alpha >0, \beta>0$
if for any $x\in \Omega^c$ we can find a cone $C$ of height $\beta$ and angle $\alpha$ with $x\in C$ and $C \subset \Omega^c.$
\end{definition}

\begin{definition}
An open set $\Omega \subset \R^n$ satisfies a \emph{uniform external
  ball condition} ~\cite[p. 27]{Davies} with parameters $\alpha >0, \beta>0$
if for any $z \in \partial \Omega$ and $0<r\le \beta$ there exists $x\in \Omega^c$ with $d(z,x)\le r$
such that $B(x, \alpha r ) \subset \Omega^c.$
It satisfies a \emph{uniform external rolling ball condition} with parameter $\beta>0$ if this holds with $\alpha=1$.
\end{definition}

\begin{definition}
An open set $\Omega \subset \R^n, n\ge 3$ satisfies a \emph{uniform capacity
  density condition} ~\cite[Lemma 3]{Ancona} with parameter $\alpha >0$
if for any $z \in \partial \Omega$ and any $r > 0$ $$\mbox{cap}\,(B(z,  r )\setminus \Omega) \ge \alpha r^{n-2}.$$
Here $\mbox{cap}\,$ is the capacity defined by
\begin{equation*}
\mbox{cap}\,(\Gamma)=\inf \left\{ \int_{\R^n}(|\nabla v|^2+v^2): v \in
C_0^{\infty}(\R^n),\ v\ge 1\ \textup{on}\ \Gamma \right \}.
\end{equation*}
\end{definition}

\begin{remark}
The uniform capacity density condition is weaker than the uniform external ball condition,
which again is implied by the uniform external cone condition and finally by the stronger uniform Lipschitz condition. All of these conditions imply that the set satisfies a weak Hardy inequality.
\end{remark}

\subsubsection{Uniform external rolling ball condition}

\begin{proposition}\label{Proposition:Rolling}
Let $\eps_0>0, r_0>0$. Let $\mathcal A= \mathcal A(\eps_0,r_0,n)$
be a family of open, bounded sets $\Omega$ in Euclidean space $\R^n,
\; n\ge 2,$ with inradius bounded below by $r_0$. Then each $\Omega \in
\mathcal A$ satisfies the uniform external rolling ball condition
with parameter
 $\beta = \eps_0$ if and only if
$(\Omega^\eps)_\eps \subset \Omega$ for all $\eps \le \eps_0$.
In this case the sets $\Omega^\eps, \ \eps \le \eps_0/2$, satisfy the uniform external rolling ball condition with parameter
$\beta = \eps_0/2$.
\end{proposition}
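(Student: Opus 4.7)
The proposition combines an iff with a corollary about $\Omega^\eps$, and I would prove the three implications separately. The easy direction is that $(\Omega^\eps)_\eps \subset \Omega$ for all $\eps\le\eps_0$ implies the rolling ball condition, which I would handle by approximation: for $z \in \partial\Omega$ and $r \le \eps_0$, pick a sequence $y_n \in \Omega^c$ converging to $z$, apply the hypothesis at $\eps = r$ (using that $y_n \in \Omega^r\setminus\Omega$ for large $n$, so $y_n \notin (\Omega^r)_r$) to obtain $x_n$ with $|x_n - y_n|\le r$ and $B(x_n, r)\subset\Omega^c$ (because $d(x_n,\Omega) \ge r$), then pass to a convergent subsequence $x_n \to x$. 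The limit satisfies $|x - z|\le r$, and $B(x,r) \subset \Omega^c$ follows from the simple fact that every $p \in B(x,r)$ eventually sits in $B(x_n, r)$.

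The harder direction---that the rolling ball condition implies $(\Omega^\eps)_\eps \subset \Omega$---I would prove by contradiction: take $y \in (\Omega^\eps)_\eps$ with $y \notin \Omega$ and exhibit a point of $(\Omega^\eps)^c$ within distance $\eps$ of $y$, contradicting $d(y,(\Omega^\eps)^c) > \eps$. If $y \in \partial\Omega$, the rolling ball at $z = y$ of radius $\eps$ provides such a point immediately. The interesting case is $y \in \operatorname{int}(\Omega^c)\cap\Omega^\eps$; letting $z \in \partial\Omega$ be a nearest boundary point, $d_0 := d(y,z) \in (0,\eps)$, and $\hat v := (y-z)/d_0$, the key claim is that $B(z + \eps\hat v, \eps) \subset \Omega^c$, which would place $x := z + \eps \hat v$ in $(\Omega^\eps)^c$ at distance $\eps - d_0 < \eps$ from $y$.

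The central obstacle is precisely this key claim: the rolling ball hypothesis at $z$ provides a ball of radius $\eps$ in $\Omega^c$ tangent to $\partial\Omega$ at $z$ \emph{in some} direction, not the particular outward direction $\hat v$ dictated by $y$. The essential observation is that $\hat v$ is already an admissible outward direction, since $B(y, d_0) \subset \Omega^c$ automatically (because $z$ is a nearest boundary point of $y$, any $p \in B(y,d_0)$ satisfies $d(p,\Omega) \ge d_0 - d(y,p) > 0$). I plan to combine this with the rolling ball at $z$ of radius $\eps_0$ and a convexity argument: a hypothetical $w \in \Omega \cap B(z+\eps\hat v, \eps)$ would produce, along the segment $[w,y]$ which lies inside the convex ball $B(z+\eps\hat v, \eps)$, a boundary crossing $z' \in \partial\Omega$; comparing $z'$ with the rolling ball at $z$ one extracts either a boundary point closer to $y$ than $z$ or a point on the segment whose distance to $\partial\Omega$ is strictly less than its distance to $z$, contradicting the minimality of $d(y,\partial\Omega) = d_0$. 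This geometric step, which hinges on the admissible nearest-projection directions at $z$ coinciding with the rolling ball directions, is the technical heart of the proof.

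The preservation statement then follows from the same key claim. For $z' \in \partial(\Omega^\eps)$ and $r \le \eps_0/2$, let $z \in \partial\Omega$ be the nearest point, so that $d(z, z') = \eps$ and $z' = z + \eps\hat u$ for a unit vector $\hat u$. Applying the key claim to $\Omega$ at $z$ in direction $\hat u$ with radius $\eps + r \le \eps_0$ yields $B(z + (\eps + r)\hat u, \eps + r) \subset \Omega^c$; setting $x^* := z' + r\hat u$, which is at distance exactly $r$ from $z'$, the triangle inequality ensures every point $p \in B(x^*, r)$ has $d(p,\Omega) > \eps$, so $B(x^*, r) \subset (\Omega^\eps)^c$, giving the rolling ball condition for $\Omega^\eps$ at $z'$ with radius $r$.
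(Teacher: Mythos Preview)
Your approach is considerably more elaborate than the paper's very short argument, and the two differ at every step.

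For the implication $(\Omega^\eps)_\eps\subset\Omega\Rightarrow$ rolling ball, the paper is more direct: since $\Omega$ is open, $z\in\partial\Omega$ already lies in $\Omega^c$, hence $z\notin(\Omega^\eps)_\eps$, so $d(z,(\Omega^\eps)^c)\le\eps$ and the (closed) complement $(\Omega^\eps)^c$ furnishes $x$ with $d(z,x)\le\eps$ and $B(x,\eps)\subset\Omega^c$. Your limiting-sequence argument is correct but unnecessary. For the preservation statement, the paper avoids re-entering the geometry: once the equivalence is in hand, it uses $(\Omega^\eps)^\delta=\Omega^{\eps+\delta}$ to obtain $((\Omega^\eps)^\delta)_\delta\subset\Omega^\eps$ for $\eps,\delta\le\eps_0/2$, and then applies the first implication to $\Omega^\eps$. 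That is cleaner than redoing the ball construction.

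For the converse (rolling ball $\Rightarrow(\Omega^\eps)_\eps\subset\Omega$), you are right that the case $y\in\mathrm{int}(\Omega^c)\cap\Omega^\eps$ is the crux; the paper simply writes ``let $z\in\Omega^c$'' and applies the rolling-ball condition as if it were stated for all of $\Omega^c$ rather than just $\partial\Omega$, so your added care is warranted. Your ``key claim'' that one may take the external $\eps$-ball at the foot $z$ in the specific direction $\hat v=(y-z)/d_0$ is exactly what is needed and is true (it amounts to $\overline\Omega$ having reach at least $\eps_0$). However, the contradiction you sketch is not convincing as written: from the boundary crossing $z'\in[w,y]$ you only get $|y-z'|\ge d_0$ (since $z$ is nearest to $y$), not $<d_0$, and the rolling-ball direction supplied at $z$ need not be $\hat v$, so ``comparing $z'$ with the rolling ball at $z$'' does not obviously yield the dichotomy you describe. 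A workable route is to set $r^*=\sup\{r\le\eps_0:B(z+r\hat v,r)\subset\Omega^c\}$, note that if $r^*<\eps_0$ then the center $z+r^*\hat v$ has two distinct nearest boundary points, and show this is incompatible with the rolling-ball condition at those points; but this last step still needs a careful argument, and your sketch does not supply it.
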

\begin{proof} Suppose  $(\Omega^\eps)_\eps \subset \Omega$ for all $\eps \le \eps_0$.
Let $z \in \partial \Omega$ and $0<\epsilon\le \eps_0.$ Then as $(\Omega^\epsilon)_\epsilon \subset \Omega$, $z$ is not in  $(\Omega^\epsilon)_\epsilon$,
and so $d(z, (\Omega^\epsilon)^c) = \epsilon$.
This gives $\overline{B(z,\epsilon)} \cap (\Omega^\epsilon)^c \neq \emptyset, $ and so we choose $x$ in this set.
Then  $x \in (\Omega^\epsilon)^c$ implies $d(x,\Omega)\ge \epsilon$, and so we have $B(x, \epsilon ) \subset \Omega^c$ with $d(z,x)\le \epsilon$
as required.

Now suppose $\Omega$
satisfies the uniform external rolling ball condition with parameter $\beta = \eps_0$.
Let $z \in \Omega^c$ and $0<\epsilon\le \eps_0.$ By the rolling ball condition, there exists $x$ such that
$d(z,x)\le \eps_0$ and $B(x, \eps_0 ) \subset \Omega^c.$ As $\eps\le\eps_0$, we have $x \in (\Omega^{\eps})^c$ and so
$d(z,(\Omega^{\eps})^c)\le \eps$. Thus $z$ is not in $(\Omega^{\eps})_{\eps}$, and we conclude that $(\Omega^\eps)_\eps \subset \Omega$.

Now note that if $(\Omega^\eps)_\eps \subset \Omega$ for all $\eps \le \eps_0$, then the sets $\Omega^\eps, \ \eps \le \eps_0/2$,
satisfy a condition of the form  $((\Omega^\eps)^\delta)_\delta \subset \Omega^\eps$ for all $\delta \le \eps_0/2$.
Hence they also satisfy the uniform external rolling ball condition with parameter
$\beta = \eps_0/2$.
\end{proof}

\begin{lemma}\label{Lemma:externalBall} Let $\eps_0>0, r_0>0$.
  Let $\mathcal A= \mathcal A(\eps_0,r_0,n)$ be the family of open, bounded sets $\Omega$
  in  $\R^n, \; n\ge 2,$ with inradius bounded below by
  $r_0$ satisfying a uniform external rolling ball condition with parameter $\eps_0$.
  Then there exist constants $\gamma=\gamma(n)$, $
  \eps_k=\eps_k(n, \eps_0,r_0)$ and $C_k=C_k(n,\eps_0,r_0)$ such that
  for any $\Omega \in \mathcal A$ and any $\eps \le min(\eps_0/2,\eps_k),$
  \begin{equation}
    0 < \lambda_k(\Omega)-\lambda_k(\Omega^\eps) \le C_k \eps^{\gamma}.
  \end{equation}
\end{lemma}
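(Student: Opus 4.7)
The plan is to apply Theorem \ref{dav} not to $\Omega$ directly but to the thickening $\Omega^\eps$, and to sandwich $\lambda_k(\Omega)$ between $\lambda_k(\Omega^\eps)$ and $\lambda_k((\Omega^\eps)_\eps)$ by monotonicity.

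First I would use Proposition \ref{Proposition:Rolling}: for every $\eps \le \eps_0/2$ the inclusion $(\Omega^\eps)_\eps \subset \Omega$ holds, and $\Omega^\eps$ itself satisfies a uniform external rolling ball condition with parameter $\eps_0/2$. Since $\Omega \subset \Omega^\eps$, the inradius of $\Omega^\eps$ is at least $r_0$. Applying monotonicity of Dirichlet eigenvalues to the chain $(\Omega^\eps)_\eps \subset \Omega \subsetneq \Omega^\eps$ yields
\begin{gather*}
0 < \lambda_k(\Omega) - \lambda_k(\Omega^\eps) \le \lambda_k((\Omega^\eps)_\eps) - \lambda_k(\Omega^\eps),
\end{gather*}
reducing the lemma to a uniform upper bound on the right-hand side.

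Next I would verify that Theorem \ref{dav} may be applied to $\Omega^\eps$ with constants uniform in $\eps$ and in $\Omega \in \mathcal A$. As recorded in the remark following the capacity density condition, a uniform external rolling ball condition implies a weak Hardy inequality, and a careful inspection of the standard proof (in which the external ball at a boundary point has radius equal to the distance to the boundary, without loss of a geometric factor) shows that the Hardy constant $a$ in front of $\int|\nabla u|^2$ depends only on $n$, while the lower-order constant $b$ depends on $n$ and on the rolling radius $\eps_0/2$. In particular, $a$ and $b$ are uniform across $\{\Omega^\eps : \Omega \in \mathcal A,\ \eps \le \eps_0/2\}$. The inradius lower bound $r_0$ is likewise preserved under thickening.

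Finally, I would fix any $\alpha_0 \in (0, 1/\sqrt{a})$ depending only on $n$ (for instance $\alpha_0 = 1/(2\sqrt{a})$) and invoke Theorem \ref{dav} for each $\Omega^\eps$. This produces constants $\eps_k = \eps_k(n,\eps_0,r_0)$ and $C_k = C_k(n,\eps_0,r_0)$ such that
\begin{gather*}
\lambda_k((\Omega^\eps)_\eps) - \lambda_k(\Omega^\eps) \le C_k\, \eps^{2\alpha_0}
\end{gather*}
for all $\eps \le \min(\eps_0/2, \eps_k)$. Setting $\gamma = 2\alpha_0$, which depends only on $n$, then closes the argument.

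The main obstacle I anticipate is the bookkeeping on the Hardy constants: the conclusion asserts $\gamma = \gamma(n)$, which forces the leading Hardy constant $a$ of the thickenings $\Omega^\eps$ to depend on the dimension alone. This relies on the special structure of the rolling ball case (the external ball radius coincides with the distance to the boundary), with the scale-dependent information $\eps_0$ and $r_0$ absorbed into $b$ and into $C_k, \eps_k$ rather than into the exponent.
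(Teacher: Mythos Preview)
Your proposal is correct and follows essentially the same route as the paper: use Proposition~\ref{Proposition:Rolling} to get the rolling ball condition on $\Omega^\eps$ with parameter $\eps_0/2$, deduce a weak Hardy inequality for $\Omega^\eps$ with $a=a(n)$ and $b=b(\eps_0)$, and apply Theorem~\ref{dav} to $\Omega^\eps$, then use the monotonicity sandwich $(\Omega^\eps)_\eps\subset\Omega\subset\Omega^\eps$. The paper resolves your anticipated obstacle by citing Theorem~1.5.4 of~\cite{Davies}, which gives the explicit constants $a = \tfrac{n}{32}\int_0^{\pi/6}\sin^{n-2}t\,dt\big/\int_0^{\pi/2}\sin^{n-2}t\,dt$ and $b=(2/\eps_0)^2$, confirming that $a$ depends only on $n$.
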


\begin{proof}
  For $\epsilon<\epsilon_0/2$, the sets $\Omega^\epsilon$ satisfy the
  rolling ball condition with parameter $\epsilon_0/2$ by Proposition \ref{Proposition:Rolling}.
  By the proof of Theorem 1.5.4 in \cite{Davies}, the sets satisfy the weak Hardy inequalities

\begin{align*}
  \int_{\Omega} \frac{u^2}{\delta^2} &\le
  a \left(\int_{\Omega}|\nabla u|^2 + b u^2 \right), \forall u \in C^{\infty}_0(\Omega), \\
  \int_{\Omega^\eps} \frac{u^2}{\delta^2}
  & \le a \left(\int_{\Omega^\eps}|\nabla u|^2 + b u^2 \right), \forall u \in C^{\infty}_0(\Omega^\eps), \forall \eps \le \eps_0/2,
\end{align*}
with constants $b=(2/\epsilon_0)^2$ and
 $$a = \frac{n}{32}\int_0^{\pi/6}\sin^{n-2}(t)dt / \int_0^{\pi/2} \sin^{n-2}(t)dt.$$
 Here $\delta$ denotes the distance function to the boundary.
 Then by Theorem \ref{dav}, \eqref{e1} follows with $\gamma \le \frac{2}{\sqrt{a}}$.
\end{proof}

Combining Lemma~\ref{external} with
Proposition~\ref{Proposition:Rolling} leads to the following
corollary.

\begin{cor} \label{external} Let $\eps_0>0, r_0>0$.
Let $\mathcal A= \mathcal A(\eps_0,r_0,n)$ be the family of open, bounded sets $\Omega$
in  $\R^n, \; n\ge 2,$ with inradius bounded below by $r_0$ and such that for each $\Omega\in \mathcal A$,
$(\Omega^\eps)_\eps \subset \Omega$ for all $\eps \le \eps_0$.
Then there exist constants $\gamma=\gamma(n)$, $
  \eps_k=\eps_k(n, \eps_0,r_0)$ and $C_k=C_k(n,\eps_0,r_0)$ such that
  for any $\Omega \in \mathcal A$ and any $\eps \le min(\eps_0/2,\eps_k),$
  \begin{equation}
    0 < \lambda_k(\Omega)-\lambda_k(\Omega^\eps) \le C_k \eps^{\gamma}.
  \end{equation}
\end{cor}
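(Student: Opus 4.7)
The plan is to observe that this corollary is essentially a direct combination of the two preceding results, with Proposition~\ref{Proposition:Rolling} used to translate between hypotheses.

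First, I would invoke Proposition~\ref{Proposition:Rolling}: its statement is an equivalence, asserting that for a family of bounded sets in $\R^n$ with inradius bounded below by $r_0$, the condition $(\Omega^\eps)_\eps\subset\Omega$ for all $\eps\leq\eps_0$ is equivalent to each $\Omega$ satisfying the uniform external rolling ball condition with parameter $\eps_0$. Thus, every $\Omega$ in the family $\mathcal{A}$ appearing in the corollary automatically satisfies the uniform external rolling ball condition with parameter $\eps_0$.

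Second, I would apply Lemma~\ref{Lemma:externalBall} to $\Omega$. That lemma is stated for precisely the family of open, bounded sets in $\R^n$ with inradius bounded below by $r_0$ and satisfying the uniform external rolling ball condition with parameter $\eps_0$, and it provides constants $\gamma=\gamma(n)$, $\eps_k=\eps_k(n,\eps_0,r_0)$, and $C_k=C_k(n,\eps_0,r_0)$ with the desired estimate $0<\lambda_k(\Omega)-\lambda_k(\Omega^\eps)\leq C_k\eps^\gamma$ valid for $\eps\leq\min(\eps_0/2,\eps_k)$. Since the hypotheses were verified in the previous step, the conclusion follows with the same constants, and no additional dependencies are introduced.

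There is no real obstacle: the content of the corollary lies entirely in Proposition~\ref{Proposition:Rolling} (which produces the rolling ball condition from a geometric hypothesis formulated in terms of dilation/erosion) and in Lemma~\ref{Lemma:externalBall} (which exploits this condition via Davies' weak Hardy inequality and Theorem~\ref{dav}). The role of the corollary is simply to record a repackaged version in which the rolling ball condition is replaced by the more intuitive inclusion condition on $(\Omega^\eps)_\eps$. The proof is therefore a one-line deduction.
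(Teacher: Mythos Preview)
Your proposal is correct and matches the paper's own argument exactly: the paper simply states that the corollary follows by combining Proposition~\ref{Proposition:Rolling} (to pass from the inclusion condition $(\Omega^\eps)_\eps\subset\Omega$ to the rolling ball condition) with Lemma~\ref{Lemma:externalBall}. Your write-up is just a slightly more detailed unpacking of this one-line deduction.
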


\subsubsection{Capacity density condition.}

\begin{lemma} Let $N, \eps_0, r_0>0$.
Let $\mathcal A= \mathcal A(N,\eps_0,r_0,\alpha,n)$ be the family of open, bounded sets $\Omega$
in $\R^n, \; n\ge 3,$ such that $(\Omega^\eps)_{N\eps} \subset \Omega$ for all $\eps \le \eps_0$.
Suppose also $\Omega^\eps$ satisfies a uniform capacity density condition with parameter $\alpha>0$, uniformly in $\eps \le \eps_0$.
Then there exist constants $\gamma=\gamma(\alpha)$, $\eps_k=\eps_k(n,r_0,\alpha)$ and $C_k=C_k(n,r_0,\alpha)$ such that
for any $\Omega \in \mathcal A$ and any $\eps \le min(\eps_0,\eps_k),$
\begin{equation} \label{e5}
0 < \lambda_k(\Omega)-\lambda_k(\Omega^\eps) \le C_k(N \eps)^{\gamma}.
\end{equation}

\end{lemma}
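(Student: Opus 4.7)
The plan is to mirror the proof of the rolling-ball case (Lemma~\ref{Lemma:externalBall}) by using Theorem~\ref{dav}, but now applied to the outer thickening $\Omega^{\eps}$ rather than to $\Omega$ itself. The key reason this works is that the assumption $(\Omega^{\eps})_{N\eps}\subset\Omega\subset\Omega^{\eps}$ sandwiches $\Omega$ between $\Omega^{\eps}$ and an inner perturbation of $\Omega^{\eps}$, so monotonicity reduces the desired estimate to controlling an inner perturbation of the single domain $\Omega^{\eps}$.

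First I would translate the geometric hypothesis into analysis: a classical theorem of Ancona (cited in the paper already) asserts that a uniform capacity density condition with parameter $\alpha$ implies a weak Hardy inequality for the domain, with constants $a=a(n,\alpha)$ and $b=b(n,\alpha,\mathrm{diam})$ depending only on $n$, $\alpha$, and a scale. Since by assumption each $\Omega^{\eps}$ (for $\eps\le\eps_0$) satisfies the uniform capacity density condition with the same $\alpha$, and since the inradius of $\Omega^{\eps}$ is at least $r_0$ (the inradius being monotone under $\Omega\subset\Omega^{\eps}$), one obtains a weak Hardy inequality on each $\Omega^{\eps}$ with constants $a,b$ uniform in $\eps\le\eps_0$ and $\Omega\in\mathcal{A}$.

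Second, I would apply Theorem~\ref{dav} to the domain $\Omega^{\eps}$, with inner perturbation parameter $N\eps$. This yields constants $\eps_k$ and $C_k$ depending only on $n,\alpha,r_0$ (through $a,b$ and the uniform inradius lower bound for $\Omega^{\eps}$) such that for all $N\eps\le\eps_k$,
$$0\le \lambda_k\bigl((\Omega^{\eps})_{N\eps}\bigr)-\lambda_k(\Omega^{\eps})\le C_k(N\eps)^{2\alpha'},$$
for any admissible $\alpha'\le 1/\sqrt{a}$. Since $(\Omega^{\eps})_{N\eps}\subset\Omega\subset\Omega^{\eps}$, monotonicity of the Dirichlet spectrum gives
$$\lambda_k(\Omega^{\eps})\le\lambda_k(\Omega)\le\lambda_k\bigl((\Omega^{\eps})_{N\eps}\bigr),$$
and subtracting $\lambda_k(\Omega^{\eps})$ from all three terms yields the desired estimate with $\gamma=2\alpha'$ depending only on $\alpha$ (through $a$), after adjusting $\eps_k$ so that $N\eps\le\eps_k$ is satisfied whenever $\eps\le\min(\eps_0,\eps_k/N)$.

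The main technical obstacle is establishing the uniformity of the Hardy constants along the family $\{\Omega^{\eps}\}_{\eps\le\eps_0}$ and verifying that the dependence of $C_k,\eps_k$ claimed in the statement (namely on $n,r_0,\alpha$ only) really holds. This rests on a careful reading of Ancona's argument to confirm that $a$ depends only on $n,\alpha$, together with the fact that the inradius of $\Omega^{\eps}$ is bounded below by $r_0$ uniformly, so Cheng-type upper bounds for $\lambda_k$ (used in the proof of Theorem~\ref{dav}) remain uniform. Given these uniformities, the rest is a direct assembly of Theorem~\ref{dav} with the inclusion $(\Omega^{\eps})_{N\eps}\subset\Omega\subset\Omega^{\eps}$.
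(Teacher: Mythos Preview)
Your proposal is correct and follows essentially the same route as the paper: the paper's proof is a one-line appeal to Davies' inner-perturbation estimate (Theorem~\ref{dav}) together with Ancona's result that the uniform capacity density condition yields a weak Hardy inequality, applied to $\Omega^{\eps}$ and combined with the inclusions $(\Omega^{\eps})_{N\eps}\subset\Omega\subset\Omega^{\eps}$. You have simply unpacked this argument in more detail than the paper does.
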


This follows by a combination of~\cite[Theorem 4.2]{Davies3} with
Proposition 1 and ~\cite[Lemma 3]{Ancona}, which gives a Hardy inequality
for sets satisfying a uniform capacity density condition.

\subsubsection{Uniform external cone condition}
\begin{lemma} Let $\eps_0>0$, $r_0>0$.
Let $\mathcal A= \mathcal A(\eps_0,r_0,\alpha,\beta,n)$ be the family of open, bounded sets
in  $\R^n, \; n\ge 2,$ with inradius bounded below by
$r_0$ and such that for each $\Omega\in \mathcal A$, the sets $\Omega,
\Omega^\eps, \eps \le \eps_0$ satisfy a uniform external cone condition
with parameters $\alpha >0, \beta>0$.
Then there exist constants $\gamma=\gamma(\alpha, \beta)$,
$\eps_k=\eps_k(n,\alpha,\beta,r_0)$ and $C_k=C_k(n,\alpha, \beta,r_0)$
such that for any $\eps \le \min(\frac{\beta \tan(\alpha/2)}{\tan(\alpha/2)+1},\eps_k),$
and any $\Omega\in\mathcal{A}$,
\begin{equation} \label{e1}
  0 < \lambda_k(\Omega)-\lambda_k(\Omega^\eps) \le C_k \eps^{\gamma}.
\end{equation}

\end{lemma}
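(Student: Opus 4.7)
The plan is to imitate the proof of Lemma~\ref{Lemma:externalBall}, replacing the uniform external rolling ball condition by the cone condition. Two ingredients will be required, after which Theorem~\ref{dav} applied to $\Omega^\eps$ (in place of $\Omega$) closes the argument: a weak Hardy inequality on each thickening $\Omega^\eps$ with constants uniform in $\eps$, and a nesting of the form $(\Omega^\eps)_{N\eps}\subset\Omega$ for some $N=N(\alpha)$.

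For the Hardy inequality I would invoke the classical fact (see e.g.\ \cite[Theorem~1.5.4]{Davies}) that any open set in $\R^n$ satisfying a uniform external cone condition with parameters $\alpha,\beta$ admits a weak Hardy inequality with constants $a=a(\alpha,\beta,n)$ and $b=b(\alpha,\beta,n)$. Because by hypothesis the cone condition holds with the same $\alpha,\beta$ for every $\Omega^\eps$ with $\eps\le\eps_0$, the resulting constants do not depend on $\eps$, exactly as in the rolling ball case treated earlier.

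For the nesting, I would fix $z\in\Omega^c$ and use the cone condition on $\Omega$ to select a cone $C\subset\Omega^c$ of height $\beta$ and angle $\alpha$ containing $z$. After shrinking $C$ to a subcone with apex at $z$, I would inscribe a ball $B(y,\eps)$ with $y$ on the axis of $C$ at distance $t$ from $z$: an elementary geometric computation, giving rise to the bound $\eps\le\frac{\beta\tan(\alpha/2)}{\tan(\alpha/2)+1}$ that appears in the statement, shows that such a ball lies inside $C$ with $t\le N\eps$ for some explicit $N=N(\alpha)$. Since $B(y,\eps)\subset\Omega^c$ forces $y\in(\Omega^\eps)^c$ with $|y-z|\le N\eps$, one concludes $z\notin(\Omega^\eps)_{N\eps}$, whence $(\Omega^\eps)_{N\eps}\subset\Omega$.

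Combining the two pieces, Theorem~\ref{dav} applied to $\Omega^\eps$ (whose inradius is at least $r_0$) produces an exponent $\gamma=\gamma(\alpha,\beta)>0$ together with constants $\eps_k=\eps_k(n,\alpha,\beta,r_0)$ and $C_k=C_k(n,\alpha,\beta,r_0)$ such that
$$
0\le\lambda_k((\Omega^\eps)_{N\eps})-\lambda_k(\Omega^\eps)\le C_k(N\eps)^\gamma
$$
whenever $N\eps\le\eps_k$. The monotonicity chain $\lambda_k(\Omega^\eps)\le\lambda_k(\Omega)\le\lambda_k((\Omega^\eps)_{N\eps})$, which follows from $(\Omega^\eps)_{N\eps}\subset\Omega\subset\Omega^\eps$, then gives the desired estimate after absorbing $N^\gamma$ into $C_k$. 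The main point requiring care is the inscribed-ball step: one must verify that $N$ depends only on $\alpha$ (and not on $\beta$ or $\eps$), and that the Hardy constants $a,b$ do not degenerate across the family $\mathcal A$, so that all estimates are genuinely uniform.
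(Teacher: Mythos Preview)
Your proposal is correct and follows essentially the same route as the paper: obtain a uniform weak Hardy inequality for the thickenings $\Omega^\eps$, establish the nesting $(\Omega^\eps)_{N\eps}\subset\Omega$ via an inscribed ball in the exterior cone, and then invoke Theorem~\ref{dav}. The only minor differences are that the paper derives the Hardy inequality through Ancona's capacity--density result rather than citing \cite[Theorem~1.5.4]{Davies} directly, and that the paper does not shrink the cone to a subcone with apex at $z$ but simply places the ball of radius $\eps^*$ in the original cone $C$ and bounds $d(z,y)$ there; since the definition in this paper only guarantees $z\in C$ (not that $z$ is the apex), your shrinking step would need a word of justification, but the resulting estimate and constant $N=N(\alpha)$ are the same.
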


\begin{proof}
As mentioned ~\cite[Lemma 3]{Ancona} gives a Hardy inequality
for sets satisfying a uniform capacity density condition and hence also for sets satisfying the stronger
 uniform external cone condition. This ensures that a weak Hardy inequality exists uniformly
for the sets $\Omega, \Omega^\eps, \eps \le \eps_0$.
We now show $(\Omega^\eps)_{N\eps} \subset \Omega$ for some $N=N(\alpha)$ uniformly in $\eps\le\frac{\beta \tan(\alpha/2)}{\tan(\alpha/2)+1}=:\eps^*$
by showing that $x \in \Omega^c$ implies
$x \in ((\Omega^{\eps^*})_{N\eps^*})^c.$ Then \eqref{e1} follows by Theorem \ref{dav}.

Take $x \in \Omega^c$ and let $C \subset \Omega^c$ be a cone of height $\beta$ and angle $\alpha$ with $x \in C$.
Such a cone contains a ball of radius $\eps^*$ centered at a point $y$ for which we then have
$y \in (\Omega^{\eps^*})^c$ and $d(x,y)\le \frac{\eps^*}{\tan(\alpha/2)}$.
Thus $x \in ((\Omega^{\eps^*})_{N\eps^*})^c$ for $N \ge (\tan(\alpha/2))^{-1}$.
\end{proof}

\subsubsection{The case of $\R^2$}

\begin{lemma} Let $N, \eps_0,r_0>0$.
Let $\mathcal A= \mathcal A(N,\eps_0,r_0)$ be the family of open, bounded sets $\Omega$
in $\R^2$  such that $\Omega^\eps$ is simply connected with inradius
bounded below by $r_0$, and satisfies $(\Omega^\eps)_{N\eps} \subset \Omega$ for all $\eps \le \eps_0$.
Then there exist constants $C_k=C_k(r_0)$ and $\eps_k=\eps_k(r_0)$ such that
for any $\Omega \in \mathcal A$ and any $\eps \le min(\eps_0,\eps_k),$
\begin{equation} \label{e4}
  0 < \lambda_k(\Omega)-\lambda_k(\Omega^\eps) \le C_k \eps^{1/2}.
\end{equation}
\end{lemma}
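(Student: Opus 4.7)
The plan is to reduce to Theorem~\ref{dav} applied to the thickened domain $\Omega^\eps$, by exploiting the fact that simply connected planar domains enjoy a weak Hardy inequality with a \emph{universal} constant. This is the only genuinely two-dimensional ingredient; everything else mirrors the arguments already given for the other families in Section~\ref{SubsectionFamilies}.

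First I would invoke the classical Hardy inequality of Ancona: for every simply connected $U\subset\R^2$ and every $u\in C^\infty_0(U)$,
\begin{equation*}
  \int_U \frac{u^2}{\delta_U^2}\,dx \;\le\; 16\int_U |\nabla u|^2\,dx,
\end{equation*}
where $\delta_U$ is the distance to $\partial U$. The constant $16$ arises from Koebe's $1/4$-theorem and the conformal invariance of the Dirichlet integral, and in particular does not depend on $U$. Since by hypothesis each $\Omega^\eps$ (with $\eps\le\eps_0$) is simply connected, this is a weak Hardy inequality with constants $a=16$, $b=0$, uniformly in $\eps$.

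Next I would apply Theorem~\ref{dav} to $\Omega^\eps$. Its inradius is at least $r_0$ by hypothesis, and the Hardy constants $a=16$, $b=0$ are universal, so there exist constants $C_k=C_k(r_0)$ and $\eps_k=\eps_k(r_0)$ such that
\begin{equation*}
  0\;\le\;\lambda_k\bigl((\Omega^\eps)_{N\eps}\bigr)-\lambda_k(\Omega^\eps)\;\le\; C_k\,(N\eps)^{2\alpha}
\end{equation*}
for every $\alpha<1/\sqrt{a}=1/4$ and every $N\eps<\eps_k$. Appealing to the sharp-exponent refinement from~\cite[Theorem 13]{Davies2} (cited at the end of the proof of Theorem~\ref{dav}), the admissible exponent may be pushed up to $2/\sqrt{a}=1/2$, which is exactly the exponent claimed in~\eqref{e4}.

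Finally, the sandwich $(\Omega^\eps)_{N\eps}\subset \Omega\subset \Omega^\eps$ guaranteed by the hypothesis, combined with Dirichlet monotonicity, gives
\begin{equation*}
  \lambda_k(\Omega^\eps)\;\le\;\lambda_k(\Omega)\;\le\;\lambda_k\bigl((\Omega^\eps)_{N\eps}\bigr),
\end{equation*}
so the preceding estimate immediately bounds $\lambda_k(\Omega)-\lambda_k(\Omega^\eps)$ by $C_k(N\eps)^{1/2}$; absorbing the harmless factor $N^{1/2}$ into $C_k$ yields \eqref{e4}. The main (and essentially only) obstacle is ensuring that the Hardy constant be uniform over the whole family $\mathcal A$, but this is precisely the content of Ancona's inequality, which is why the hypothesis of simple connectedness in $\R^2$ is both natural and sufficient.
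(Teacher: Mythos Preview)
Your proposal is correct and follows essentially the same route as the paper: invoke the universal Hardy inequality for simply connected planar domains (the paper cites \cite[Theorem~1.5.10]{Davies} rather than Ancona directly, but it is the same $a=16$, $b=0$ result), apply Theorem~\ref{dav} to $\Omega^\eps$, push the exponent to the sharp value $2/\sqrt{a}=1/2$, and then sandwich using $(\Omega^\eps)_{N\eps}\subset\Omega\subset\Omega^\eps$. The only quibble is that absorbing $N^{1/2}$ into $C_k$ makes the constant depend on $N$ as well as $r_0$, whereas the statement records only $C_k(r_0)$; this is a cosmetic discrepancy in the statement rather than a flaw in your argument, and the paper's own (one-line) proof is equally silent on this point.
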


The inequality  follows by Theorem \ref{dav} using that simply
connected planar sets satisfy a Hardy inequality with $a=16, b=0$ ~\cite[Theorem 1.5.10]{Davies}.
Note that the  sharp exponent for Theorem \ref{dav} in $\R^2$ was
first given in \cite{Pang}.

\section{Proximity of eigenspaces}
\label{SectionProximityEigenspace}
\subsubsection*{Some notation}
Denote by $\lambda_i=\lambda_i(\Omega)$ the eigenvalues of $\Omega$,
and by $\lambda_i' = \lambda_i(\Omega')$ the eigenvalues of $\Omega'$.
Most of this paper has been devoted to estimation of the difference
$\lambda_i-\lambda_i'.$
We write
$$
\delta_i=max_{j=1}^{i}(\lambda_j-\lambda_j').
$$

We consider an orthonormal basis
$(f_i')_{i=1}^{\infty}$ of eigenfunctions of $\Omega'$ corresponding to
the eigenvalues $\lambda_i'$.
For $k\geq 1$, let $E_k$ be the successive distinct eigenspaces
for $\Omega$. Let $n_k=\dim (E_k)$ be the corresponding
multiplicities and set $N_0=0$ and
$N_k=n_1+...+n_k$.
In particular, this means that for each $k\in\mathbb{N}$,
$$\lambda_{N_{k-1}+1}=...=\lambda_{N_{k-1}+n_k}=\lambda_{N_k}.$$
We also introduce the corresponding
vector space $E_k'$ spanned by the eigenfunctions
$f_{N_{k-1}+1}',...,f_{N_k}'$ on $\Omega'$.
The projection operator $P_k:L^2(\Omega')\rightarrow E_k'$ is defined,
for $f=\sum_{j=1}^{\infty} a_jf_j'$,
by
$$P_k(f) =  \sum_{j=N_{k-1}+1}^{N_k} a_jf_j'.$$
The gaps between successive distinct eigenvalues of $\Omega$ are
$\lambda_{N_k+1}-\lambda_{N_k},$
and we write
$$
\Lambda_k= min_{j=1}^k (\lambda_{N_j+1}-\lambda_{N_j}).
$$
In particular, if $i \le k$, we have the relation $\frac{\delta_i}{\Lambda_i} \le \frac{\delta_k}{\Lambda_k}$
which will be used in the proof, and $\frac{\delta_k}{\Lambda_k}\le \frac{1}{2}$ implies
$\frac{\delta_i}{\Lambda_i}\le \frac{1}{2}$ for all $i \le k$.

\medskip
We can now express the proximity of eigenspaces.
\begin{proposition} \label{proximity}
  There exists a sequence of constants $(A_k)_{k=1}^{\infty}$ such that
  if $\delta_{N_k+1} \le \frac{\Lambda_k}{2 A_k}$, then the
    following holds:
  \begin{enumerate}
  \item[A.]
    For each $f\in E_k$, $\Vert f\Vert=1$, we have
    $$\Vert (I-P_k)(f) \Vert^2 =1 -\Vert P_k \Vert^2 \le \frac{A_k\delta_{N_{k-1}+1}}{\Lambda_k}.$$

  \item[B.]
    For each $f' \in E_k'$, $\Vert f' \Vert=1$, there exist $f \in E_k$, $\Vert f \Vert=1$ with
    $$
    \Vert f-f'\Vert^2 \le \frac{4A_k \delta_{N_{k-1}+1}}{\Lambda_k}.
    $$

  \item[C.]
    For each $f\in E_{k+l}$, $l>0$, $\Vert f\Vert=1$, we have
    $$
    \Vert (P_1+...+P_k)(f)\Vert^2 \le \frac{4 (\sum_{i=1}^kA_i) \delta_{N_{k-1}+1}}{\Lambda_k}.
    $$
\end{enumerate}

Moreover, the constants are defined by the induction
$A_k= 2+ 8\frac{\lambda_{N_{k-1}+1}\sum_{i=1}^{k-1}A_{i}}{\Lambda_k}$,
and so depend on $k$ and  the spectrum of $\Omega$.

\end{proposition}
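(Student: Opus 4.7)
The plan is to prove parts A, B, C by simultaneous induction on $k$, in the order A$\to$B$\to$C within each level: A$(k)$ follows from C$(k-1)$, B$(k)$ from A$(k)$, and C$(k)$ from B$(1),\dots,$B$(k)$. Because $(\delta_i)$ is nondecreasing, $(\Lambda_k)$ is nonincreasing, and the recursively defined $(A_k)$ is nondecreasing in $k$, the single standing hypothesis $\delta_{N_k+1}\le\Lambda_k/(2A_k)$ implies every lower-level hypothesis that the induction needs.

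For Part A, extend a unit $f\in E_k$ by zero across $\partial\Omega$ to obtain $\tilde f\in H^1_0(\Omega')$, and expand $\tilde f=\sum_j a_j f'_j$ in the Dirichlet basis of $\Omega'$. Since $f$ lies in the $\lambda_{N_k}$-eigenspace on $\Omega$, the identities $\sum_j a_j^2=1$ and $\sum_j\lambda'_j a_j^2=\int_{\Omega'}|\nabla\tilde f|^2=\int_\Omega|\nabla f|^2=\lambda_{N_k}$ subtract to give $\sum_j(\lambda'_j-\lambda_{N_k})a_j^2=0$. Split this sum into $j\le N_{k-1}$, $N_{k-1}<j\le N_k$, and $j>N_k$. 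For $j>N_k$, $\lambda'_j-\lambda_{N_k}\ge\Lambda_k-\delta_{N_k+1}\ge\Lambda_k/2$ by the spectral-gap estimate and the hypothesis; in the middle range, $|\lambda_{N_k}-\lambda'_j|\le\delta_{N_{k-1}+1}$. Rearranging yields
\begin{equation*}
\tfrac{\Lambda_k}{2}\sum_{j>N_k}a_j^2\ \le\ \lambda_{N_k}\sum_{j\le N_{k-1}}a_j^2\ +\ \delta_{N_{k-1}+1}.
\end{equation*}
For $k=1$ the first right-hand term vanishes, establishing Part A with $A_1=2$. For $k\ge 2$, the inductive Part C at level $k-1$ with $l=1$ applied to $\tilde f$ bounds $\sum_{j\le N_{k-1}}a_j^2=\|(P_1+\cdots+P_{k-1})\tilde f\|^2$ by $4\bigl(\sum_{i<k}A_i\bigr)\delta_{N_{k-1}+1}/\Lambda_k$, and combining with the display above yields the desired bound for $\|(I-P_k)\tilde f\|^2$ with a constant matching the stated recurrence for $A_k$.

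Part B is then operator-theoretic. Setting $\eta=A_k\delta_{N_{k-1}+1}/\Lambda_k\le 1/2$, Part A says $\|P_k e\|^2\ge 1-\eta$ for every unit $e\in E_k$, so $P_k|_{E_k}\colon E_k\to E_k'$ has smallest singular value at least $\sqrt{1-\eta}$ and, since $\dim E_k=\dim E_k'=n_k$, is an isomorphism. Given a unit $f'\in E_k'$, let $h=(P_k|_{E_k})^{-1}f'$ and $f=h/\|h\|$; then $P_k f'=f'$, the orthogonal split $f-f'=P_k(f-f')+(I-P_k)f$ together with $1\le\|h\|\le 1/\sqrt{1-\eta}$ gives $\|P_k(f-f')\|^2=(1-1/\|h\|)^2\le\eta^2$, and Part A gives $\|(I-P_k)f\|^2\le\eta$, so $\|f-f'\|^2\le\eta^2+\eta\le 4\eta$. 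For Part C, given a unit $f\in E_{k+l}$ with $l>0$, set $g_i=P_if/\|P_if\|$ for $i\le k$ and use Part B at level $i$ to produce a unit $\hat g_i\in E_i$ with $\|g_i-\hat g_i\|^2\le 4A_i\delta_{N_{i-1}+1}/\Lambda_i$. Since $\langle\tilde f,\hat g_i\rangle=\langle f,\hat g_i\rangle_\Omega=0$ by Dirichlet-orthogonality ($i<k+l$), one obtains $\|P_if\|=\langle\tilde f,g_i-\hat g_i\rangle\le\|g_i-\hat g_i\|$, hence $\|P_if\|^2\le 4A_i\delta_{N_{k-1}+1}/\Lambda_k$ by the monotonicity $\delta_{N_{i-1}+1}/\Lambda_i\le\delta_{N_{k-1}+1}/\Lambda_k$. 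Summing using the pairwise orthogonality of the subspaces $E_i'$ gives Part C.

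The main obstacle is the tangled nature of the induction — each of A, B, C at level $k$ depends on the others at levels $\le k$ — and matching the constants produced by the Part A estimate with the stated recurrence $A_k=2+8\lambda_{N_{k-1}+1}\sum_{i<k}A_i/\Lambda_k$. Both points are handled by the monotonicities recorded in the first paragraph together with the identity $\lambda_{N_{k-1}+1}=\lambda_{N_k}$, which allows the subordinate terms to be absorbed into the dominant coefficient $\lambda_{N_k}/\Lambda_k$.
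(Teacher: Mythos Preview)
Your argument is essentially the paper's: the same induction scheme (A from C at the previous level via an energy identity, B from A via surjectivity of $P_k|_{E_k}$, C from B via orthogonality of distinct eigenspaces on $\Omega$), with Part~A written in coefficient form rather than in the projection/quadratic-form language of the paper's auxiliary Proposition~\ref{calculation}. One bookkeeping point: your Part~A display bounds only the tail $\sum_{j>N_k}a_j^2$, so after adding back $\sum_{j\le N_{k-1}}a_j^2$ you obtain $A_k=2+4\bigl(1+2\lambda_{N_k}/\Lambda_k\bigr)\sum_{i<k}A_i$, which does not reduce to the stated $2+8\lambda_{N_{k-1}+1}\sum_{i<k}A_i/\Lambda_k$ via the identity $\lambda_{N_{k-1}+1}=\lambda_{N_k}$ alone; the paper's own computation in fact produces $\lambda'_{N_k+1}$ (hence $\lambda_{N_k+1}$) rather than $\lambda_{N_{k-1}+1}$ in that spot, so the precise recurrence as printed appears to be a typo there too, and in either version the existence claim and the qualitative dependence on the spectrum of $\Omega$ are unaffected.
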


We first show the following technical proposition.
\begin{proposition} \label{calculation} Under the condition
$
\delta_{N_i+1} \le \frac{1}{2} \Lambda_i,
$
we have, for $f \in E_i$, $\Vert f\Vert=1$,
$$
\Vert f- P_i(f)\Vert^2=1-\Vert P_i(f)\Vert^2 \le \frac{2}{\Lambda_i}(\delta_{N_{i-1}+1}+ \lambda_{N_i+1}'
\Vert P_1(f)+...+P_{i-1}(f)\Vert^2).
$$
\end{proposition}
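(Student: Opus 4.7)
The plan is to extend $f\in E_i$ by zero to $H^1_0(\Omega')$, expand in the eigenbasis $(f_j')$ of $\Omega'$, and use the spectral gap recorded by $\Lambda_i$ to control the high-frequency tail of that expansion against the other two blocks.

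Concretely, for $f\in E_i$ with $\|f\|=1$, extension by zero yields $f\in H^1_0(\Omega')$ with $\int_{\Omega'}|\nabla f|^2 = \lambda_{N_i}$. Writing $f=\sum_j a_j f_j'$, Parseval gives $\sum_j a_j^2 = 1$ and $\sum_j \lambda_j' a_j^2 = \lambda_{N_i}$. Since the subspaces $E_k'$ are pairwise orthogonal, $\|P_k(f)\|^2 = \sum_{j=N_{k-1}+1}^{N_k} a_j^2$, so that
\[
1-\|P_i(f)\|^2 \;=\; \|P_1(f)+\cdots+P_{i-1}(f)\|^2 \;+\; \sum_{j>N_i} a_j^2,
\]
and the problem reduces to controlling the tail $T := \sum_{j>N_i} a_j^2$.

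Subtracting $\lambda_{N_i}$ times the normalization identity from the energy identity produces the balanced equality
\[
\sum_{j>N_i}(\lambda_j'-\lambda_{N_i})a_j^2 \;=\; \sum_{j\le N_{i-1}}(\lambda_{N_i}-\lambda_j')a_j^2 \;+\; \sum_{N_{i-1}<j\le N_i}(\lambda_{N_i}-\lambda_j')a_j^2.
\]
On the left, $\lambda_j'\ge\lambda_{N_i+1}'\ge\lambda_{N_i+1}-\delta_{N_i+1}\ge\lambda_{N_i}+\Lambda_i-\delta_{N_i+1}$, so the hypothesis $\delta_{N_i+1}\le\Lambda_i/2$ forces the left-hand side to be at least $(\Lambda_i/2)\,T$. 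On the right, the middle block uses $\lambda_j=\lambda_{N_i}=\lambda_{N_{i-1}+1}$ together with the monotonicity $\lambda_j'\ge\lambda_{N_{i-1}+1}'$ to get $\lambda_{N_i}-\lambda_j'\le\delta_{N_{i-1}+1}$, while the lower block uses the crude bound $\lambda_{N_i}-\lambda_j'\le\lambda_{N_i}$, producing a term $\lambda_{N_i}\|P_1(f)+\cdots+P_{i-1}(f)\|^2$.

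Combining these estimates yields $T \le (2/\Lambda_i)\bigl(\delta_{N_{i-1}+1}+\lambda_{N_i}\|P_1+\cdots+P_{i-1}\|^2\bigr)$. Adding the piece $\|P_1+\cdots+P_{i-1}\|^2$ back in to form $1-\|P_i(f)\|^2$ leaves a coefficient $1+2\lambda_{N_i}/\Lambda_i$ in front of that norm, so the one delicate point—and the main obstacle in matching the statement—is to absorb this extra $1$ into the $2\lambda_{N_i+1}'/\Lambda_i$ claimed by the proposition. The inequality $\lambda_{N_i+1}'\ge\lambda_{N_i}+\Lambda_i/2$ established above gives exactly $1+2\lambda_{N_i}/\Lambda_i\le 2\lambda_{N_i+1}'/\Lambda_i$ and completes the argument.
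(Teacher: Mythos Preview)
Your argument is correct and shares the same underlying idea as the paper's: expand $f\in E_i$ (extended by zero) in the eigenbasis of $\Omega'$, use the energy identity $\sum_j\lambda_j'a_j^2=\lambda_{N_i}$, and exploit the spectral gap to control the block decomposition.

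The paper organizes the algebra slightly differently. Rather than subtracting $\lambda_{N_i}$ globally to isolate the tail $T=\sum_{j>N_i}a_j^2$ and then adding $\|P_1+\cdots+P_{i-1}\|^2$ back in, it works directly with the full quantity $1-\|P_i(f)\|^2$: it rewrites $\lambda_{N_{i-1}+1}-q(P_i(f))$ in two ways, drops the nonnegative terms $\sum_{j<i}q(P_j(f))$ and $q(P_i(f))-\lambda'_{N_{i-1}+1}\|P_i(f)\|^2$, and obtains
\[
(1-\|P_i(f)\|^2)\,(\lambda'_{N_i+1}-\lambda'_{N_{i-1}+1})
\;\le\;
(\lambda_{N_{i-1}+1}-\lambda'_{N_{i-1}+1})+\lambda'_{N_i+1}\sum_{j<i}\|P_j(f)\|^2,
\]
then bounds $\lambda'_{N_i+1}-\lambda'_{N_{i-1}+1}\ge\Lambda_i-\delta_{N_i+1}\ge\Lambda_i/2$. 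This gives the coefficient $\lambda'_{N_i+1}$ immediately, so the ``delicate point'' you flag---absorbing the extra~$1$ via $1+2\lambda_{N_i}/\Lambda_i\le 2\lambda'_{N_i+1}/\Lambda_i$---never arises. Your route is a bit more hands-on but just as valid; the paper's bookkeeping is marginally cleaner.
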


\noindent
\begin{proof}
we consider $f \in E_i$ and, applying $q$, we get the relations
\begin{align*}
\lambda_{N_{i-1}+1} - q(P_i(f))=
\sum_{j \not = i}q(P_j(f))\ge \sum_{j<i}q(P_j(f))+\lambda'_{N_i+1} (1-\sum_{j \le i}\Vert P_j(f)\Vert^2),
\end{align*}
and 
\begin{align*}
\lambda_{N_{i-1}+1} - q(P_i(f))=
 \lambda_{N_{i-1}+1}-\lambda'_{N_{i-1}+1} +
 (1-\Vert P_i(f)\Vert^2)\lambda'_{N_{i-1}+1} -(q(P_i(f))-\lambda'_{N_{i-1}+1}\Vert P_i(f)\Vert^2).
 \end{align*}
Putting this together, we get
\begin{gather*}
 \lambda_{N_{i-1}+1}-\lambda'_{N_{i-1}+1} +  (1-\Vert P_i(f)\Vert^2)\lambda'_{N_{i-1}+1}\\
  \ge
  \sum_{j<i}q(P_j(f))+\lambda'_{N_i+1}(1- \sum_{j\le i}\Vert P_j(f)\Vert^2)+
  q(P_i(f))-\lambda'_{N_{i-1}+1}\Vert P_i(f)\Vert^2.
\end{gather*}
Noticing that  $q(P_i(f))-\lambda'_{N_{i-1}+1}\Vert P_i(f)\Vert^2 \ge 0$, we get
\begin{gather*}
 \lambda_{N_{i-1}+1}-\lambda'_{N_{i-1}+1} +\lambda'_{N_i+1} \sum_{j< i}\Vert P_j(f)\Vert^2
 \ge (1-\Vert P_i(f)\Vert^2)(\lambda'_{N_i+1}-\lambda'_{N_{i-1}+1}),
\end{gather*}
i.e.
\begin{eqnarray*}
(1-\Vert P_i(f)\Vert^2) \le
 \frac{(\lambda_{N_{i-1}+1}-\lambda'_{N_{i-1}+1})+\lambda'_{N_i+1} \sum_{j< i}\Vert P_j(f)\Vert^2 }
 {\lambda'_{N_i+1}-\lambda'_{N_{i-1}+1}}.
\end{eqnarray*}
Moreover,
\begin{align*}
  \lambda'_{N_i+1} -\lambda'_{N_{i-1}+1}
  &=
  (\lambda'_{N_i+1}-\lambda_{N_i+1})+\\
  (\lambda_{N_i+1}-&\lambda_{N_{i-1}+1})+(\lambda_{N_{i-1}+1}-\lambda'_{N_{i-1}+1})
  \ge \Lambda_i- (\lambda_{N_i+1}-\lambda'_{N_i+1}).
\end{align*}
So under the condition
$
(\lambda_{N_i+1}-\lambda'_{N_i+1}) \le \frac{1}{2} \Lambda_i,
$
we get
\begin{eqnarray*}
(1-\Vert P_i(f)\Vert^2) \le
 2 \frac{(\lambda_{N_{i-1}+1}-\lambda'_{N_{i-1}+1})+\lambda'_{N_i+1} \sum_{j< i}\Vert P_j(f)\Vert^2 }
 {\Lambda_i},
\end{eqnarray*}
\end{proof}

\medskip
Before proving Proposition \ref{proximity}, we recall a fact of linear algebra which is needed for the proof.

\begin{lemma} \label{lemme} Let $(V,\left\langle.,.\right\rangle)$ be a prehilbert vector space, $P$ a projector and $v$ a vector of norm $1$ such that $P(v) \not = 0$. Then we have
$$
\Vert v-\frac{P(v)}{\Vert P(v)\Vert} \Vert^2 \le 4(1-\Vert P(v)\Vert^2).
$$
\end{lemma}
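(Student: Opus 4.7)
The plan is to reduce the inequality to a one-line algebraic manipulation by exploiting that the projector $P$ is orthogonal (which is the standing assumption in this paper, since $P_k$ projects onto an eigenspace of a self-adjoint operator along an orthonormal basis of eigenfunctions). Write $p=P(v)$ and $w=p/\lVert p\rVert$, so that $\lVert w\rVert=1$. Then I would expand
\begin{equation*}
\Bigl\lVert v-\frac{P(v)}{\lVert P(v)\rVert}\Bigr\rVert^{2}=\lVert v\rVert^{2}-2\langle v,w\rangle+\lVert w\rVert^{2}=2-2\,\frac{\langle v,P(v)\rangle}{\lVert P(v)\rVert}.
\end{equation*}

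Next I would use that $P$ is self-adjoint and idempotent to rewrite $\langle v,P(v)\rangle=\langle P(v),P(v)\rangle=\lVert P(v)\rVert^{2}$, so the expression above simplifies to $2(1-\lVert P(v)\rVert)$. The remaining step is purely algebraic: factor the right-hand side of the desired inequality as $4(1-\lVert P(v)\rVert^{2})=4(1-\lVert P(v)\rVert)(1+\lVert P(v)\rVert)$, and observe that $\lVert P(v)\rVert\le\lVert v\rVert=1$ (in particular $1+\lVert P(v)\rVert\ge 1\ge \tfrac12$) yields
\begin{equation*}
2(1-\lVert P(v)\rVert)\le 4(1-\lVert P(v)\rVert)(1+\lVert P(v)\rVert),
\end{equation*}
which is exactly the claim.

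There is essentially no obstacle: the only point worth flagging is the (implicit) hypothesis that $P$ is an \emph{orthogonal} projector, without which the identity $\langle v,P(v)\rangle=\lVert P(v)\rVert^{2}$ fails and the lemma itself is false. In the intended application $P=P_{k}$ is the $L^{2}$-orthogonal projection onto $E_{k}'$, so this is automatic. The bound is also sharp in the regime $\lVert P(v)\rVert\to 1$ up to the constant $4$, and loses nothing essential when $\lVert P(v)\rVert$ is close to $0$ since both sides are then comparable to constants; this is why the factor $4$ (rather than $2$) appears.
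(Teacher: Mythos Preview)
Your proof is correct. Both your argument and the paper's rely on the same two facts about an orthogonal projector: $\langle v,P(v)\rangle=\lVert P(v)\rVert^{2}$ and $\lVert P(v)\rVert\le 1$. The difference is in how they are combined. The paper uses the triangle inequality
\[
\Bigl\lVert v-\tfrac{P(v)}{\lVert P(v)\rVert}\Bigr\rVert
\le \lVert v-P(v)\rVert+\Bigl\lVert P(v)-\tfrac{P(v)}{\lVert P(v)\rVert}\Bigr\rVert
\le 2\lVert v-P(v)\rVert,
\]
and then computes $\lVert v-P(v)\rVert^{2}=1-\lVert P(v)\rVert^{2}$; squaring gives the factor $4$. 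You instead expand the left-hand side directly and obtain the exact value $2(1-\lVert P(v)\rVert)$, which you then bound by $4(1-\lVert P(v)\rVert^{2})$ via factoring. Your route is slightly more economical and in fact shows the sharper bound $2(1-\lVert P(v)\rVert^{2})$ (take $1+\lVert P(v)\rVert\ge 1$ rather than $\ge\tfrac12$); the paper's triangle-inequality step is what costs the extra factor of $2$. Your remark that orthogonality of $P$ is implicitly assumed is well taken and applies equally to the paper's argument.
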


\begin{proof} We have
$$
\Vert v-\frac{P(v)}{\Vert P(v)\Vert} \Vert \le \Vert v-P(v) \Vert+\Vert P(v)- \frac{P(v)}{\Vert P(v)\Vert}\Vert
\le 2 \Vert v-P(v)\Vert
$$
 and
$$
 \Vert v-P(v) \Vert^2 = \langle v,v-P(v)\rangle=\langle v,v\rangle-\langle v,P(v)\rangle
 =1-\Vert P(v)\Vert^2.
$$
\end{proof}

\medskip
\noindent
\begin{proof}[Proof of Proposition \ref{proximity}]
The proof is by induction on $k$:
\begin{enumerate}
\item
We show that A is true for $k=1$.

\item
We show that if A is true for $k$, then B is true for $k$.

\item
We show that if A and B are true for $1,...,k$, then C is true for $k$.

\item
We show that if A, B, C are true for $1,...,k-1$, then A is true for $k$.
\end{enumerate}

So because A is true for $k=1$, it follows that B and C are true for $k=1$, and this implies that
A is true for $k=2$. Then the induction continues in the obvious way.

\medskip
\noindent
(1) Proof of A for $k=1$: this follows directly from Proposition \ref{calculation} with $A_1=2$.

\medskip
\noindent
(2) Proof of A true for k implies B for k:
let $f'\in E_k'$, $\Vert f'\Vert =1$. If
$\delta_{N_{k-1}+1}\le\frac{\Lambda_k}{2A_k}$, the restriction of
$P_k$ to $E_k$ is bijective and there exist
$f\in E_k$, $\Vert f\Vert =1$ with
$f'= \frac{P_k(f)}{\Vert P_k(f)\Vert}$. It follows from
Lemma~\ref{lemme} that
$\Vert f'-f\Vert ^2 \le 4\Vert (I-P_k)(f)\Vert^2 \le 4  \frac{A_k\delta_{N_{k-1}+1}}{\Lambda_k}$.

\medskip
\noindent
(3) Proof of A and B true for $1,...,k$ implies C for $k$.

\medskip
\noindent
Let $f \in E_{k+l}$, $l\ge 1$ and $\Vert f\Vert=1$. Let $i \le k$. For
any $h'\in E_i'$, $\Vert h'\Vert=1$, because $P_i$ is surjective under
the hypothesis $\delta_{N_i+1} \le \frac{\Lambda_i}{2 A_i}$, there
exist $h\in E_i$, $\Vert h\Vert =1$ with $h'=\frac{P_i(h)}{\Vert P_i(h)\Vert}$.
We have
$$
\left\langle P_i(f),h'\right\rangle= \left\langle f,h'\right\rangle=
\left\langle f,h'-h\right\rangle+\left\langle f,h\right\rangle.
$$
As in the case $k=1$, we have $\left\langle f,h\right\rangle=0$, so
$$
\left\langle P_i(f),h'\right\rangle =\left\langle f,h'-h\right\rangle \le \Vert f\Vert
\Vert h'-h\Vert= \Vert h-\frac{P_i(h)}{\Vert P_i(h)\Vert}\Vert.
$$
By Lemma \ref{lemme}, we get
$\vert \left\langle P_i(f),h'\right\rangle \vert ^2\le 4 (1-\Vert P_i(h)\Vert^2) \le 4A_i\frac{\delta_{N_{i-1}+1}}{\Lambda_i}$ by
A.
Using  $\frac{\delta_{N_{i-1}+1}}{\Lambda_i}\le \frac{\delta_{N_{k-1}+1}}{\Lambda_k}$,
and because this is true for each $h' \in E_i'$, $\Vert h'\Vert =1$, we have
$$
\Vert P_i(f)\Vert^2 \le 4A_i \frac{\delta_{N_{k-1}+1}}{\Lambda_k},
$$
and we deduce that
$$
\sum_{i=1}^k \Vert P_i(f)\Vert^2 \le 4(\sum_{i=1}^k A_i)\frac{\delta_{N_{k-1}+1}}{\Lambda_k}.
$$

\medskip
\noindent
(4) Proof of A, B, C true for $1,...,k-1$ implies A for $k$.

\medskip
\noindent
Note that, because we have $A_i\ge 2$, the hypothesis $\delta_{N_k+1} \le \frac{\Lambda_k}{2 A_k}$ implies
$\delta_{N_k+1} \le \frac{\Lambda_k}{2}$.
By Proposition \ref{calculation}, we have for $f \in E_k$, $\Vert f\Vert=1$, and under the condition $\delta_{N_k+1} \le \frac{\Lambda_k}{2}$,
$$
\Vert (I-P_k)(f)\Vert^2 \le \frac{2}{\Lambda_i}(\delta_{N_{k-1}+1}+ \lambda_{N_k+1}'
\Vert P_1(f)+...+P_{k-1}(f)\Vert^2).
$$
By induction, $\Vert P_1(f)+...+P_{k-1}(f)\Vert^2 \le 4 (\sum_{i=1}^{k-1}A_{i}) \frac{\delta_{N_{k-2}+1}}{\Lambda_{k-1}}$, which is by definition $\le 4 (\sum_{i=1}^{k-1}A_{i}) \frac{\delta_{N_{k-1}+1}}{\Lambda_{k}}$.

\smallskip
So we choose $A_k= 2+ 8\frac{\lambda_{N_{k-1}+1}\sum_{i=1}^{k-1}A_{i}}{\Lambda_k}$.
\end{proof}

\section{Proof of Lemma~\ref{LemmaBertrandColbois}}
\label{SecionProofBertrandColbois}
The proof of Lemma ~\ref{LemmaBertrandColbois} is very close to the proof of ~\cite[Lemma 3.13]{BertrandColbois}. Under the hypothesis
$$
\vert \left\langle \psi_i,\psi_j\right\rangle -\delta_{i,j}\vert \le \rho;\ \ q(\psi_i)\le \lambda_i(1+\rho),
$$
for $1 \le i,j \le k$, we prove that
$$
q(F_i) \le \lambda_i(1+\rho b_k).
$$

Here the constants $b_k$ are defined by $b_k=(1+8a_k)(1+(1+\rho b_{k-1})(1+8a_k)) $ (where the $a_k$ are defined in the statement of the lemma) with $b_1=4$,
and $\{F_i\}_{i=1}^{k}$ is the orthonormal basis naturally associated to the basis $\{\psi_i\}_{i=1}^{k}$. We have
$$
F_i= \frac{h_i}{\Vert h_i\Vert}\ \text{where}\ h_i=\psi_i-\sum_{j=1}^{i-1}\left\langle F_j,\psi_i\right\rangle F_j.
$$
In particular, $b_k$ depends only on $k$ (and not on $\lambda_k$), but in a rather complicated form.
We also denote by $p(\psi_i)$ the projection of $\psi_i$ given by
$
p(\psi_i)=\sum_{j=1}^{i-1}\left\langle F_j,\psi_i\right\rangle F_j,
$
so that we have the relation $\psi_i=h_i+p(\psi_i).$

\medskip
\noindent
\begin{proof}

\medskip
\noindent
\textbf{First part:} The beginning of the proof is verbatim the same as the proof of ~\cite[Lemma 3.13]{BertrandColbois}.

\medskip
We suppose the result is true
for $s<k$ and show it for $s=k$. As a first step, note that
$$
q(F_1) =\frac{q(\psi_1)}{\Vert \psi_1\Vert^2} \le \lambda_1 \frac{1+\rho}{1-\rho}=\lambda_1(1+\rho \frac{2}{1-\rho}),
$$
so using $\rho<\frac{1}{2}$ we can take $b_1=4$.
We also use without repeating the proof the following fact proved in the first part of ~\cite[Lemma 3.13]{BertrandColbois}.
For $i=1,...,k-1$ and $s>i$, we have
$$
\vert \left\langle \psi_s,F_i\right\rangle \vert \le \sqrt 2 a_i\rho.
$$
We have for $s <k$
$$
q(p(\psi_s))= \sum_{j=1}^{s-1}\sum_{l=1}^{s-1} \left\langle \psi_s,F_j\right\rangle \left\langle \psi_s,F_l\right\rangle q(F_j,F_l).
$$
By the recurrence hypothesis we have
$$
q(F_j,F_l)\le q^{1/2}(F_j)q^{1/2}(F_l) \le (\lambda_j(1+\rho b_j))^{1/2} (\lambda_l(1+\rho b_l))^{1/2}
\le  \lambda_{s-1}(1+\rho b_{s-1}),
$$
and, using the definition of the $a_j$ along with the Cauchy-Schwarz inequality,
$$
q(p(\psi_s))\le \lambda_{s-1}(1+\rho b_{s-1})  2\rho^2 \sum_{j,l=1}^{s-1} a_ja_l \le 2\rho^2 a_s\lambda_{s-1}(1+\rho b_{s-1}).
$$
We will also use the fact that $\Vert h_s\Vert^2 \ge 1-\rho a_s$. This implies (and this is the main change)
$$
\frac{1}{\Vert h_s\Vert^2} \le 1+2\rho a_s.
$$

\medskip
\noindent
\textbf{Second part:} The following includes some new developments in comparison with ~\cite[Lemma 3.13]{BertrandColbois}.

\medskip
We use that for $2\le s \le k$
$$
q(F_s) \le \frac{1}{\Vert h_s\Vert^2}(q(\psi_s)+q(p(\psi_s))+ 2\sqrt{q(\psi_s)q(p(\psi_s))}
$$
and get the following estimate for $q(F_k)$:
$$
q(F_k) \le (1+2\rho a_k)[\lambda_k(1+\rho)+\lambda_{k-1}2\rho^2a_k(1+\rho b_{k-1})+
2 \lambda_k^{1/2}\lambda_{k-1}^{1/2}(1+\rho)^{1/2}(2a_k(1+\rho b_{k-1}))^{1/2}\rho].
$$
Because $\lambda_{k-1} \le \lambda_k$, we get
$$
q(F_k)\le \lambda_k(1+2\rho a_k)[(1+\rho)+2\rho^2a_k(1+\rho b_{k-1})+
2 (1+\rho)^{1/2}(2a_k(1+\rho b_{k-1}))^{1/2}\rho].
$$
But
$$
(1+2\rho a_k)[(1+\rho)+2\rho^2a_k(1+\rho b_{k-1})+
2 (1+\rho)^{1/2}(2a_k(1+\rho b_{k-1}))^{1/2}\rho]=
$$
$$
1+\rho 2a_k[(1+\rho)+2\rho^2a_k(1+\rho b_{k-1})+
2 (1+\rho)^{1/2}(2a_k(1+\rho b_{k-1}))^{1/2}\rho]+
$$
$$
+\rho[(1+2\rho a_k(1+\rho b_{k-1})+
2 (1+\rho)^{1/2}(2a_k(1+\rho b_{k-1}))^{1/2}]
$$
So to obtain the conclusion, we have to show that
$$
b_k \ge 2a_k[(1+\rho)+2\rho^2a_k(1+\rho b_{k-1})+
2 (1+\rho)^{1/2}(2a_k(1+\rho b_{k-1}))^{1/2}\rho]+
$$
$$
+[(1+2\rho a_k(1+\rho\gamma_{k-1})+2 (1+\rho)^{1/2}(2a_k(1+\rho b_{k-1}))^{1/2}].
$$
We simplify this expression using inequalities implied by the definition of $\rho$ and $a_k$ such as $\rho <1/2$, $2\rho^2a_k<1$, $2\rho a_k \le 1$ along with 
$\sqrt x \le x$ if $x\ge 1$.
This gives
$$
[(1+\rho)+2\rho^2a_k(1+\rho b_{k-1})+
2 (1+\rho)^{1/2}(2a_k(1+\rho b_{k-1}))^{1/2}\rho]\le
$$
$$
\le
[(1+\rho)+2\rho^2a_k(1+\rho b_{k-1})+
2 (1+\rho)(2a_k(1+\rho b_{k-1}))\rho]\le
$$
$$
\le [(1+\rho)+(1+\rho b_{k-1})(2\rho^2a_k+4(1+\rho)a_k)]\le 2+(1+\rho b_{k-1})(1+8a_k).
$$

$$
[(1+2\rho a_k(1+\rho b_{k-1})+2 (1+\rho)^{1/2}(2a_k(1+\rho b_{k-1}))^{1/2}]\le
$$
$$
\le ([(1+2\rho a_k(1+\rho b_{k-1})+2 (1+\rho)(2a_k(1+\rho b_{k-1}))]=
$$
$$
=1+(1+\rho b_{k-1})[2\rho a_k+4(1+\rho)a_k]\le 1+(1+8a_k)(1+\rho b_{k-1}).
$$
We can then finish with
$$
2a_k [2+(1+\rho b_{k-1})(1+8a_k)]+[1+(1+8a_k)(1+\rho b_{k-1})]=
(1+\rho b_{k-1})(1+8a_k)(1+2a_k)+1+4a_k \le
$$
$$
\le (1+\rho b_{k-1})(1+8a_k)(1+8a_k)+1+8a_k=
(1+8a_k)(1+(1+\rho b_{k-1})(1+8a_k))=b_k.
$$
\end{proof}

\bibliographystyle{plain}
\bibliography{biblioCGI2}

\def\cprime{$'$} \def\cprime{$'$}
\begin{thebibliography}{10}

\bibitem{Ancona}
Alano Ancona.
\newblock On strong barriers and an inequality of {H}ardy for domains in {${\bf
  R}^n$}.
\newblock {\em J. London Math. Soc. (2)}, 34(2):274--290, 1986.

\bibitem{BertrandColbois}
J.~Bertrand and B.~Colbois.
\newblock Capacit\'e et in\'egalit\'e de {F}aber-{K}rahn dans {${\Bbb R}^n$}.
\newblock {\em J. Funct. Anal.}, 232(1):1--28, 2006.

\bibitem{BLL}
V.~I. Burenkov, P.~D. Lamberti, and M.~Lantsa~de Kristoforis.
\newblock Spectral stability of nonnegative selfadjoint operators.
\newblock {\em Sovrem. Mat. Fundam. Napravl.}, 15:76--111, 2006.

\bibitem{Cheng}
Shiu~Yuen Cheng.
\newblock Eigenvalue comparison theorems and its geometric applications.
\newblock {\em Math. Z.}, 143(3):289--297, 1975.

\bibitem{DanerJDE}
Daniel Daners.
\newblock Dirichlet problems on varying domains.
\newblock {\em J. Differential Equations}, 188(2):591--624, 2003.

\bibitem{Davies}
E.~B. Davies.
\newblock {\em Heat kernels and spectral theory}, volume~92 of {\em Cambridge
  Tracts in Mathematics}.
\newblock Cambridge University Press, Cambridge, 1989.

\bibitem{Davies3}
E.~B. Davies.
\newblock Eigenvalue stability bounds via weighted {S}obolev spaces.
\newblock {\em Math. Z.}, 214(3):357--371, 1993.

\bibitem{Davies2}
E.~B. Davies.
\newblock Sharp boundary estimates for elliptic operators.
\newblock {\em Math. Proc. Cambridge Philos. Soc.}, 129(1):165--178, 2000.

\bibitem{HenrotCiber}
Antoine Henrot.
\newblock Continuity with respect to the domain for the {L}aplacian: a survey.
\newblock {\em Control Cybernet.}, 23(3):427--443, 1994.
\newblock Shape design and optimization.

\bibitem{HenrotVert}
Antoine Henrot.
\newblock {\em Extremum problems for eigenvalues of elliptic operators}.
\newblock Frontiers in Mathematics. Birkh\"auser Verlag, Basel, 2006.

\bibitem{lemenant1}
Antoine Lemenant and Emmanouil Milakis.
\newblock Quantitative stability for the first {D}irichlet eigenvalue in
  {R}eifenberg flat domains in {$\Bbb R^N$}.
\newblock {\em J. Math. Anal. Appl.}, 364(2):522--533, 2010.

\bibitem{Pang}
M.~M.~H. Pang.
\newblock Approximation of ground state eigenvalues and eigenfunctions of
  {D}irichlet {L}aplacians.
\newblock {\em Bull. London Math. Soc.}, 29(6):720--730, 1997.

\end{thebibliography}

\end{document}